\documentclass[a4paper,11pt, reqno, %draft
]{amsart}

\usepackage[english]{babel}
\usepackage[utf8]{inputenc}

\usepackage{graphicx}
\usepackage{amscd}
\usepackage{amsfonts}
\usepackage{amsaddr}%adreça sota titol
\usepackage{latexsym}
\usepackage{amsthm}
\usepackage{amssymb,amsmath}
\usepackage{enumerate}
\usepackage{verbatim}

\usepackage{import}

\usepackage[titletoc,title]{appendix}

\usepackage[usenames]{color}

\usepackage{hyperref}
\hypersetup{colorlinks=false}%links only seen in the pdf, not printed

\widowpenalty10000%%para que no deje frases sueltas
\clubpenalty10000

\usepackage{subfigure}

\title[Sing.\ pert.\ of {B}laschke prod. and connectivity of Fatou comp.]{\bf Singular perturbations of {B}laschke Products and connectivity of Fatou components}

\author[Jordi Canela]{Jordi Canela}
\thanks{The  author was partially supported by the  grant  346300 for IMPAN from the Simons Foundation and the matching 2015–2019 Polish MNiSW fund,  by RedIUM and MINECO (Spain) through the research network MTM2014-55580-REDT, and by the mathematics institute IMAC}
\address{Inst. Univ. de Matemàtiques i Aplicacions de Castelló (IMAC)\\
Universitat Jaume I\\
Av. de Vicent Sos Baynat, s/n, 12071 Castelló de la Plana, Spain}
\address{Inst. of Math. Polish Academy of Sciences (IMPAN)\\
ul. \'Sniadeckich 8, 00-656 Warszawa, Poland}
\email{canela@maia.ub.es}

\newtheorem{teor}{Theorem} [section]
\newtheorem{thm}[teor]{Theorem}
\newtheorem{propo}[teor]{Proposition}

\newtheorem{lemma}[teor]{Lemma}

%teoremes sense numero
\newtheorem*{teoremB}{Theorem B}%teoremes sense numero
\newtheorem*{teoremA}{Theorem A}%teoremes sense numero
%teoremes sense numero

%theoremstyle{definition} fa que els theorems surtin amb lletra normal
\theoremstyle{definition}

\newsavebox{\savepar}

\newcommand{\com}{\mathbb{C}}
\newcommand{\wcom}{\widehat{\mathbb{C}}}

\newcommand{\real}{\mathbb{R}}

\newcommand{\nat}{\mathbb{N}}

\newcommand{\dis}{\mathbb{D}}
\newcommand{\cercle}{\mathbb{S}^1}
\newcommand{\re}{\rm{Re}}
\newcommand{\im}{\rm{Im}}

\makeatletter%%per a footnote sense res
\def\blfootnote{\gdef\@thefnmark{}\@footnotetext}
\makeatother

%\date{\today}

%%%%%%%%%%%%%%%%%%%%%%%FORMAT%%%%%%%%%%%%%%%%%%%%%%%%%

% We're working with A4, which is 21cm by 29.7
\setlength{\paperwidth}{210mm}
\setlength{\paperheight}{297mm}
\setlength{\hoffset}{-1in}
\setlength{\voffset}{-1.75in}
\setlength{\textheight}{220mm}
\setlength{\textwidth}{155mm}
\setlength{\oddsidemargin}{32mm}
\setlength{\evensidemargin}{32mm}
% oddsidemargin/evensidemargin is the inside/outside of the book
%textwidth+two margins=3Dpaperwidth
%\setlength{\evensidemargin}{33mm}
\setlength{\topmargin}{5cm}
\setlength{\headheight}{12pt}
\setlength{\headsep}{25pt}
\setlength{\parskip}{0.2cm}
\setlength{\footskip}{1.2cm} %def=3D30pt=3D1.054cm

%****************
%\setcounter{figure}{0}

%%%%%%%%LISTS%%%%%
%
    {\end{list}}

% ******************************************************************
% ******************************************************************
% ****************** CHANGE BELOW THIS LINE ************************
% ******************************************************************
% ******************************************************************

\begin{document}
%\begin{comment}
\begin{abstract}
 {\noindent \small The goal of this paper is to study the family of singular perturbations of Blaschke products given by $B_{a,\lambda}(z)=z^3\frac{z-a}{1-\overline{a}z}+\frac{\lambda}{z^2}$. We focus on the study of these rational maps for parameters $a$ in the punctured disk $\dis^*$ and $|\lambda|$ small. We prove that, under certain conditions, all Fatou components of a singularly perturbed Blaschke product {$B_{a,\lambda}$} have finite connectivity but there are components of arbitrarily large connectivity within its dynamical plane. Under the same conditions we prove that the Julia set is the union of countably many Cantor sets of quasicircles and uncountably many point components.}
\end{abstract}

\maketitle
\blfootnote{\textit{Keywords:} holomorphic dynamics, Blaschke products,  McMullen-like Julia sets, singular perturbations, connectivity of Fatou components.

2010 \textit{Mathematics Subject Classification:}  Primary: 37F45; Secondary: 37F10, 37F50, 30D05.}

%\vspace{0.5cm}

%\tableofcontents

\section{Introduction}

Given a Rational map $f:\wcom\rightarrow \wcom$, where $\wcom$ denotes the Riemann sphere, the Fatou set $\mathcal{F}(f)$ is defined as the set of points $z\in\wcom$ such that the family of iterates $\{f(z), f^2(z)=f(f(z)),...\}$ is normal in some open neighbourhood of $z$. Its complement, the Julia set $\mathcal{J}(f)$, corresponds to the set of points where the dynamics is chaotic. The Fatou and the Julia sets are totally invariant under $f(z)$. The Fatou set is open and its connected components, known as Fatou components, map among themselves. The celebrated result of Sullivan \cite{Su} states that all Fatou components of rational maps are either periodic or preperiodic. Moreover, any cycle of periodic Fatou components has at least a critical point, i.e.\ a zero of $f'(z)$, somehow related to it (see \cite{Mi1}).

The aim of this paper is to study singular perturbations of a family of Blaschke products and analyse the structure of their dynamical plane. We focus on the special case for which Fatou components of arbitrarily large connectivity appear. The study of singular perturbations of rational maps is a very active research field in  holomorphic dynamics. They were used by McMullen to show the existence of buried Julia components for rational maps, i.e.\ connected components of the Julia set which do not intersect the boundary of any Fatou component. He described the nowadays called McMullen Julia sets,  Julia sets consisting of Cantor sets of quasicircles (see \cite{McM1}). A quasicircle is a Jordan curve which is the image of the unit circle by a quasiconformal map (c.f.\ \cite{BF}). To prove the existence of such Julia sets, he studied the singular perturbations of $p_n(z)=z^n$ given by

\begin{equation}\label{perturbedpolyn}
Q_{\lambda, n, d}(z)=z^n+\frac{\lambda}{z^d},
\end{equation}

\noindent where $\lambda\in\com$, $n\geq2$ and $d\geq1$. 
The map $p_n(z)$  has $z=0$ and $z=\infty$ as superattracting fixed points of local degree $n$. However, the singular perturbation performed on $Q_{\lambda, n, d}$ transforms $z=0$ on a preimage of $\infty$ and $n+d$ new critical points appear around it.  McMullen showed that the Julia set $\mathcal{J}(Q_{\lambda, n, d})$ is a Cantor set of quasicircles if $|\lambda|$ is small enough and $1/n+1/d< 1$ (see Figure~\ref{McMullenBasilica} (a)). Afterwards, Devaney, Look and Uminsky \cite{DLU} studied the different possible Julia sets of $Q_{\lambda, n, d}$ which may occur when all of the critical points belong to the basin of attraction of infinity. {They proved that}, in this situation, $\mathcal{J}(Q_{\lambda, n, d})$ can only be a Cantor set, a Cantor set of quasicircles or a Sierpinski curve (a homeomorphic image of a Sierpinski carpet). 
Since then, the family $Q_{\lambda, n, d}$ has  been the object of study of several other papers (see e.g.\  \cite{Mora}, \cite{QWY}, \cite{QRWY}).

\begin{figure}[p!]
    \centering
    
    \subfigure[\scriptsize{A Julia set of quasicircles.}]{
    \includegraphics[width=200pt]{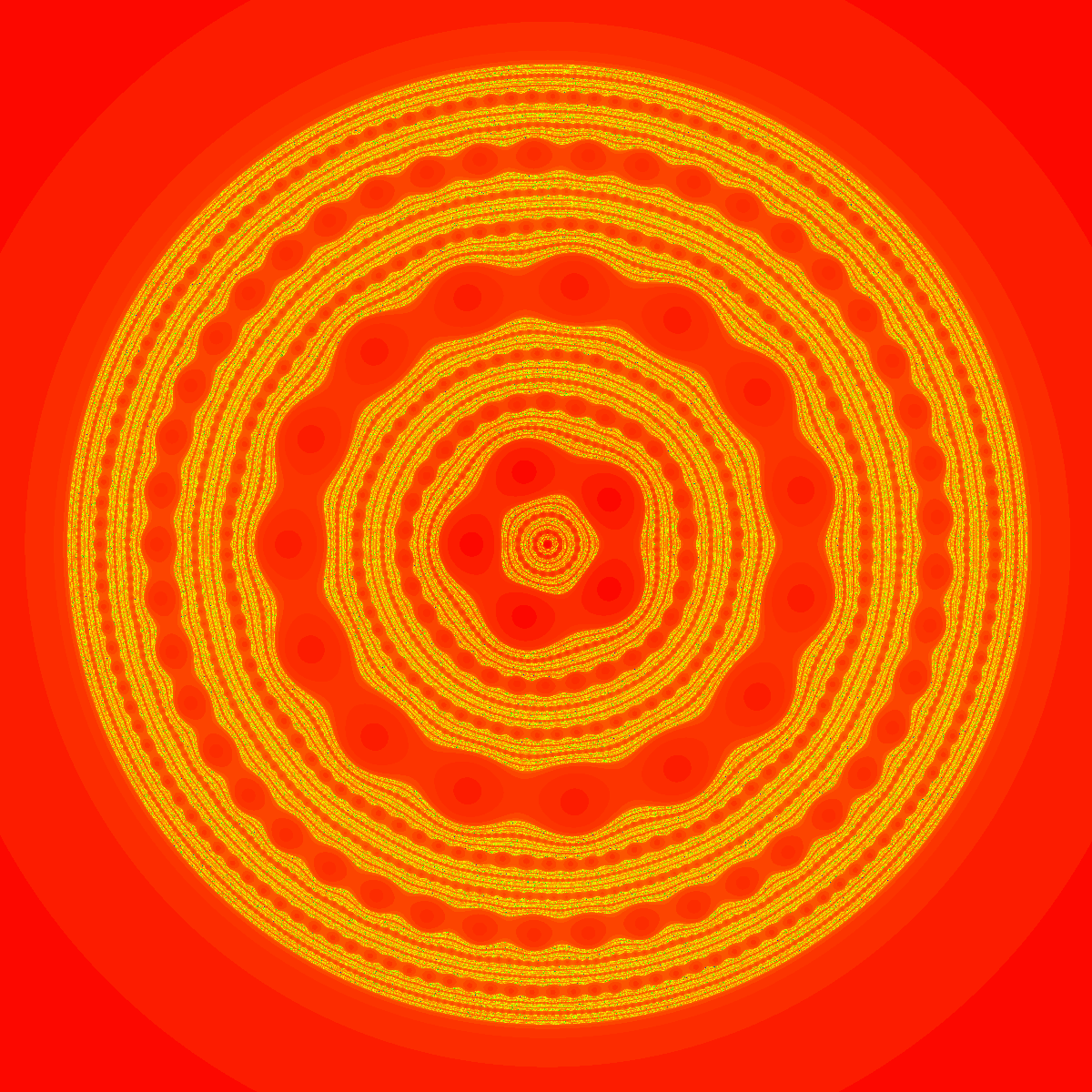}}
    \hspace{0.1in}
    \subfigure[\scriptsize{The Basilica. } ]{
    \includegraphics[width=200pt]{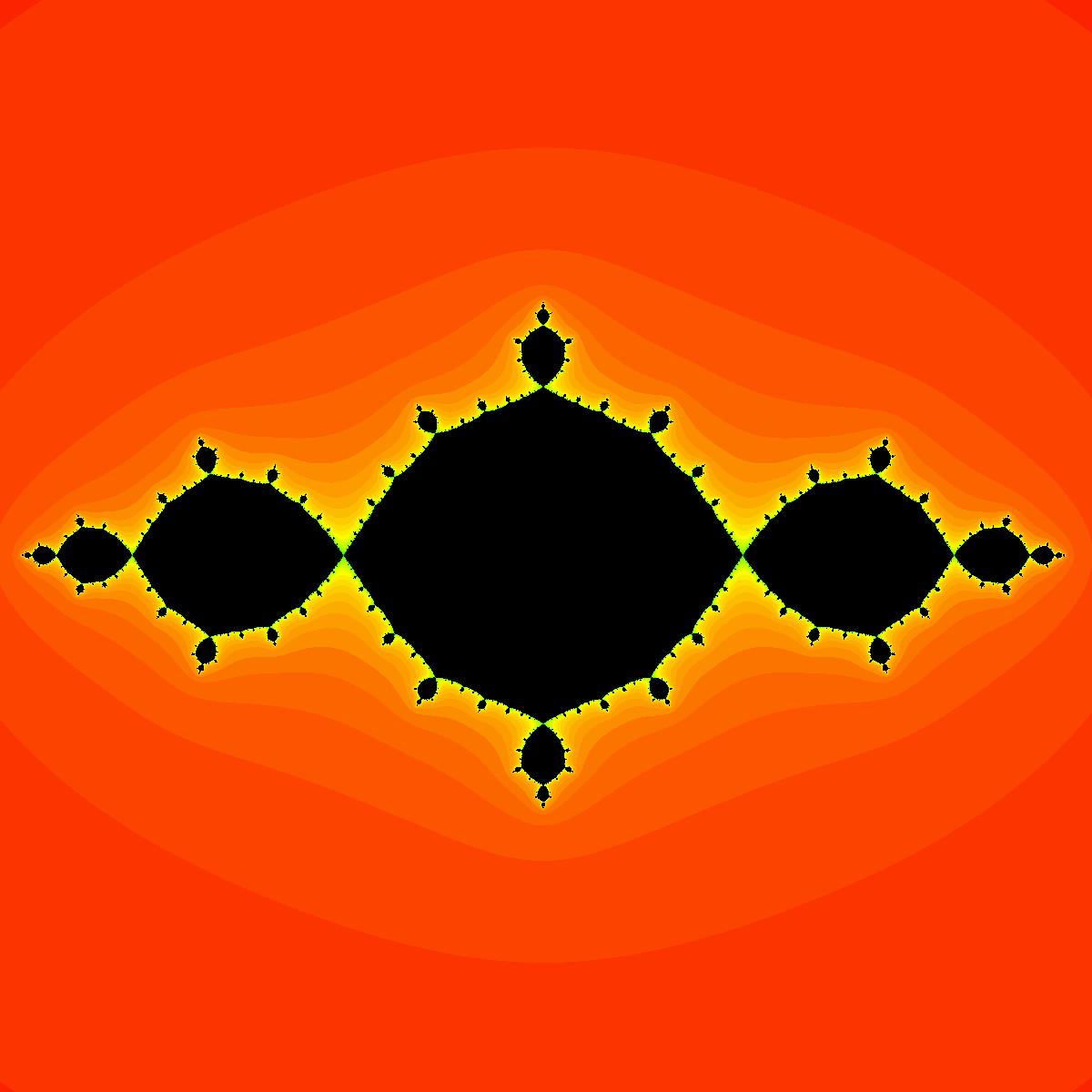}}

    \subfigure[\scriptsize{A perturbed Basilica.}]{
    \includegraphics[width=200pt]{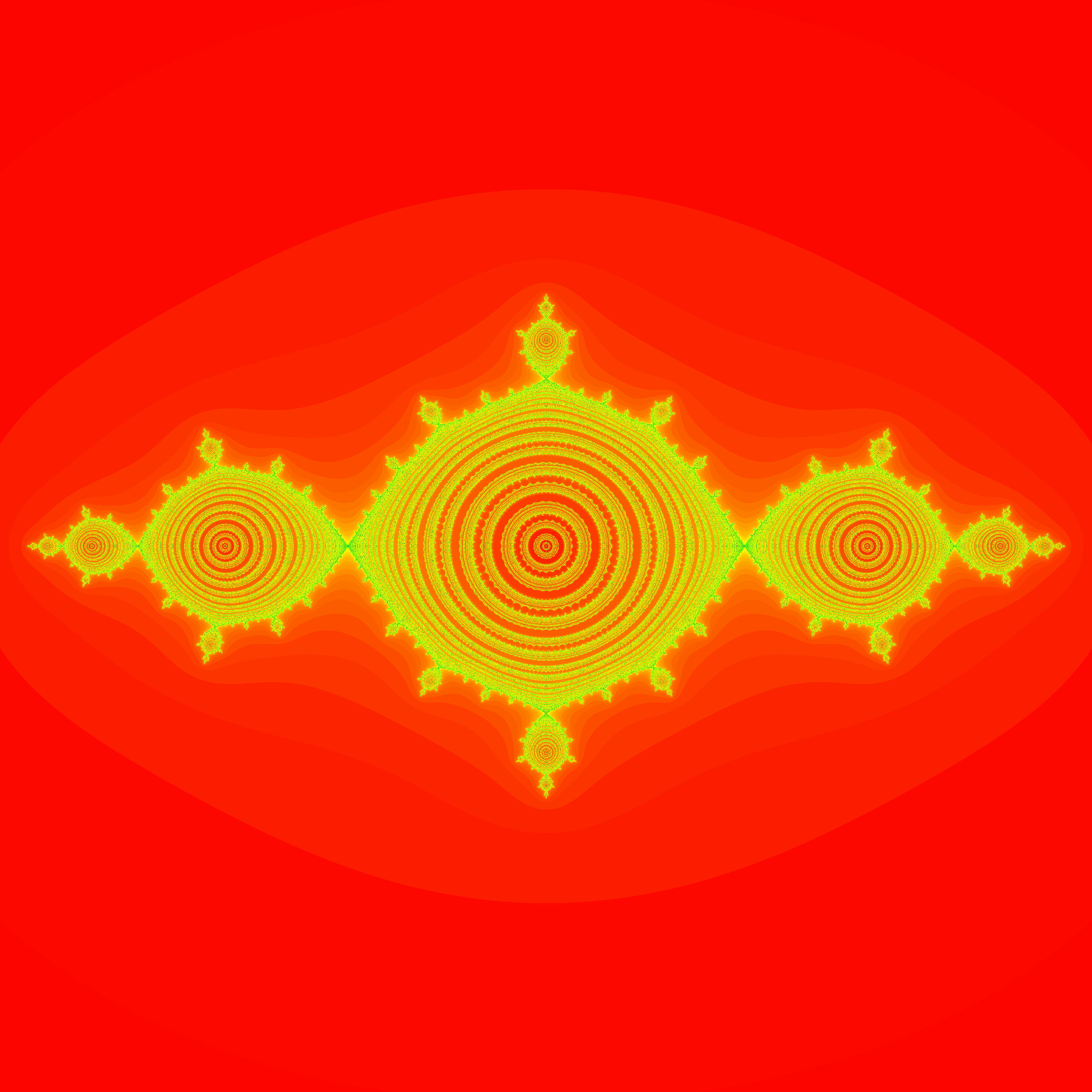}}
    \hspace{0.1in}
    \subfigure[\scriptsize{Zoom in Figure (c).}]{
    \includegraphics[width=200pt]{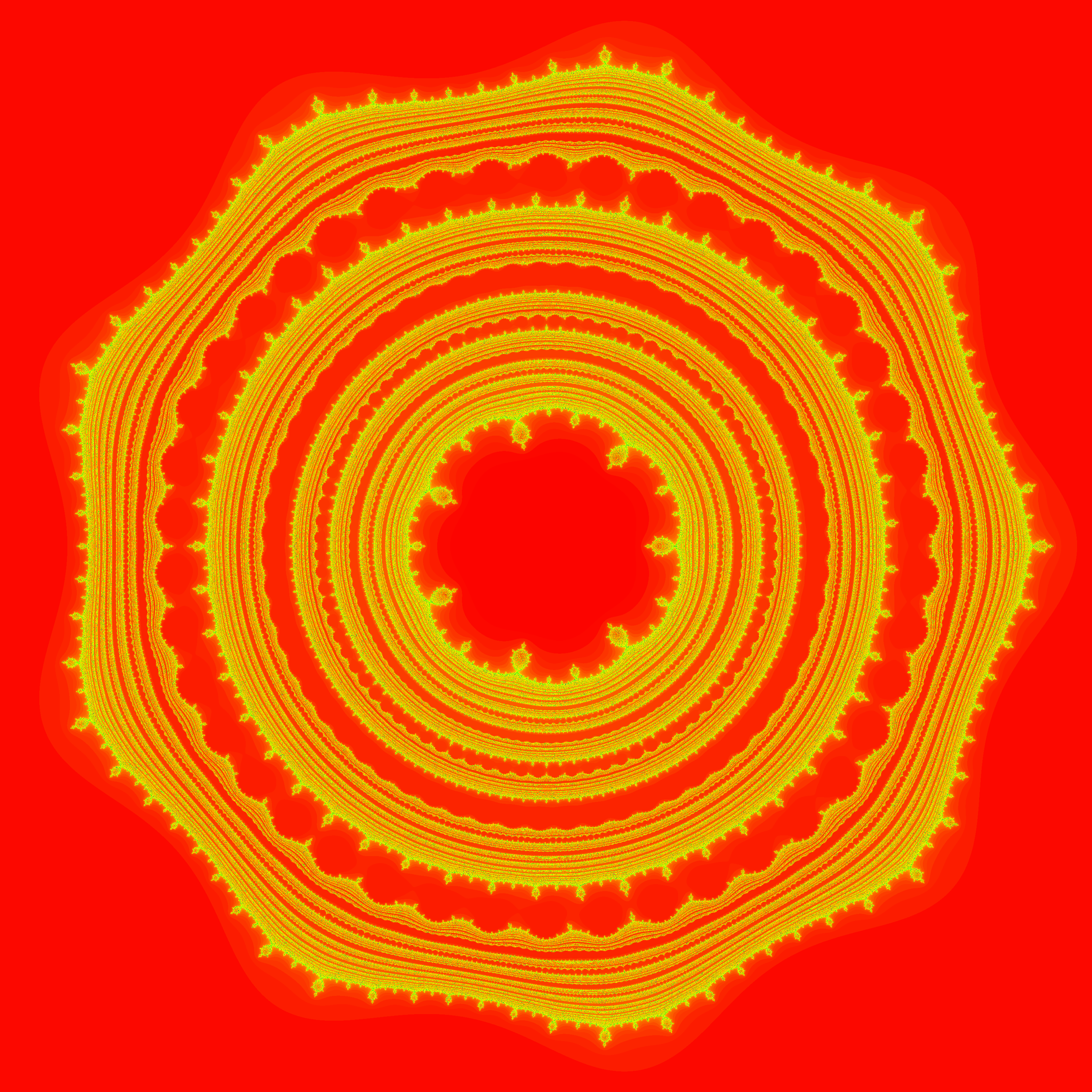}}
    
    \caption{\small Figure (a) corresponds to the dynamical plane of $Q_{\lambda,3,2}(z)=z^3+\lambda/z^2$ for $\lambda=10^{-4}$. Figure (b) corresponds to the dynamical plane of $p_{2,-1}(z)=z^2-1$, known as the Basilica.  The map {$p_{2,-1}$}  has a period 2 superattracting cycle at $\{0,-1\}$. Figure (c) corresponds to the dynamical plane of $f(z)= z^2-1+\lambda/(z^7(z+1)^5)$ for $\lambda=10^{-22}$. This map is a singular perturbation of the polynomial {$p_{2,-1}$} which adds a pole at each point of the superattracting cycle. Figure (d) is a magnification of (c) around the point $z=0$.  The colours are as follows. We use a scaling from yellow to red to plot the basin of attraction of $z=\infty$. In Figure (b) we plot the basin of attraction of the cycle $\{0,-1\}$ in black. In the other figures we may observe an approximation of the Julia set in yellow.  }

    \label{McMullenBasilica}
\end{figure}

It arises as a very interesting topic to investigate McMullen-like Julia sets, i.e.\ Julia sets of rational maps which are somehow similar to the ones described by McMullen. They may be obtained by adding one or several poles  to a polynomial $p_{n,c}(z)=z^n+c$ (see \cite{BDGR} and \cite{GMR}, respectively), where $n\geq 2$ and $c\in\com$ is such that $z=0$ belongs to a superattracting cycle. In this scenario, the structure of the Julia set of the polynomial {$p_{n,c}$} plays a very important role after the perturbation. The set $\mathcal{J}(p_{n,c})$ coincides with the boundary of the immediate basin of attraction of $z=\infty$, $A_{p_{n,c}}^*(\infty)$. After the perturbation, this remains as a component of the Julia set which moves continuously with respect to $\lambda$. Moreover,  many preimages of it appear (see Figure~\ref{McMullenBasilica} (b), (c) and (d)). Under certain conditions, this may lead to the existence of Cantor sets of closed curves instead of quasicircles (c.f.\  \cite{GMR}).

In this paper we introduce a family of rational maps with McMullen-like Julia sets which present different dynamics than the one of the previously mentioned works. To do so, we consider singular perturbations of rational maps whose Julia set is the unit circle and which have free critical points, i.e.\ critical points which do not belong to a superattracting cycle. The existence of these free critical points allows the appearance of other types of dynamics in the Fatou set. More specifically, we study a  family of singular perturbations of the Blaschke products
 
 \begin{equation}\label{blasequation}
B_{a}(z)= z^3\frac{z-a}{1-\overline{a}z},
\end{equation}

\noindent where $a\in\dis^*=\dis\setminus\{0\}$. These Blaschke products have $z=0$ and $z=\infty$ as superattracting fixed points. The basin of attraction of $z=0$ is given by $A(0)=\dis$ while  the basin of attraction of $z=\infty$ is the Riemann sphere minus the closed unit disk,  $A(\infty)=\wcom\setminus\overline{\dis}$. Consequently, the Julia set of these maps is the unit circle, $\mathcal{J}(B_a)=\cercle$. Moreover, they have two free critical points, $c_-(a)\in\dis$ and $c_+(a)\in\com\setminus\overline{\dis}$, which play an important role after the perturbation. See Section~\ref{sectionblas} for a more detailed introduction to their dynamics.

 We consider singular perturbations of the Blaschke products $B_a$ given by

\begin{equation}\label{perturbedblasequation}
B_{a,\lambda}(z)=z^3\frac{z-a}{1-\overline{a}z}+\frac{\lambda}{z^2},
\end{equation}

\noindent where $\lambda\in\com$ and $a\in\dis^*$. If $\lambda=0$, they coincide with the Blaschke products $B_a(z)$ (Equation~(\ref{blasequation})).
If $\lambda\neq 0$ then the point $z=0$ becomes a pole and there appear $5$ new critical points and $5$ new zeros around it. The fixed point $z=\infty$ remains as a superattracting fixed point and there are two free critical points $c_+(a,\lambda)$ and $c_-(a,\lambda)$ which come from analytic continuation of the critical points $c_+(a,0)=c_+(a)$ and $c_-(a,0)=c_-(a)$ of $B_{a,0}(z)=B_a(z)$. For $|\lambda|$ small enough, the dynamics of $B_{a,\lambda}(z)$ around $z=0$ are similar to the ones of $Q_{\lambda, 3, 2}$. Since the maps $Q_{\lambda, n, d}$ present McMullen Julia sets for $|\lambda|$ small enough if $1/n+1/d<1$ and we have that $1/3+1/2<1$, we may a priori expect some sort of McMullen-like Julia set for the maps {$B_{a,\lambda}$}.  On the other hand, the free critical points allow the existence of other types of Julia sets than the ones described by Devaney, Look and Uminsky in \cite{DLU}. However, the lack of symmetry of the critical points of {$B_{a,\lambda}$} makes it much more difficult to investigate the family {$B_{a,\lambda}$} than the family {$Q_{\lambda, n, d}$} from a global point of view. We want to point out that similar results to the ones presented in this paper could be obtained if replacing the powers 3 and 2 in Equation~(\ref{perturbedblasequation}) by naturals $n$ and $d$ such that $1/n+1/d<1$.

The goal of this paper is to study the dynamics of the family {$B_{a,\lambda}$} for $a\in\dis^*$ and $|\lambda|$ small, and to show that this family provides examples of new phenomena related to the connectivity of Fatou components. More precisely, it is known that any periodic Fatou component has connectivity   1, 2 or $\infty$ (c.f.\ \cite{Bear}), while preperiodic Fatou components can have finite connectivity greater than 2. Beardon \cite{Bear} introduced an explicit family of rational maps suggested by Shishikura with a Fatou component of finite connectivity greater than 2. This family was studied more deeply in \cite{GL}. The authors proved that if the parameter is small enough then there are Fatou components of connectivity 3 and 5. They also showed the existence of maps with a Fatou component of connectivity 9 and conjectured that for any given $n\in\nat$, there exists a map within the family with a Fatou component of connectivity greater than $n$. Baker, Kotus and L{\"u} \cite{BKL} used a quasiconformal surgery procedure to show that, given any $n\in\nat$, there exist rational and meromorphic maps with preperiodic Fatou components of connectivity $n$. Later on some explicit examples of rational maps with such properties were introduced (see \cite{QJ} and \cite{Sti}). However, the examples presented in \cite{BKL}, \cite{QJ} and \cite{Sti} use an increasing number of critical points to guarantee the existence of such multiply connected Fatou components. Therefore, the degree of the rational maps provided as examples grows with $n$. In Theorem~A we prove that, fixed $a\in\dis^*$, if $\lambda\in\com^*=\com\setminus\{0\}$, $|\lambda|$ is mall enough, {and $c_-(a,\lambda)$ belongs to the basin of attraction of $z=\infty$, $A(\infty)$,} then there are only three possibilities: either every Fatou component has connectivity less or equal than two, or less or equal than three, or one can find components of arbitrarily large finite connectivity.  Consequently, this family of degree 6 rational functions may contain maps with preperiodic Fatou components of arbitrarily large finite connectivity within a single dynamical plane. We do not know of any previous example having this property. In Figure~\ref{dynamfigureAB} and Figure~\ref{dynamfigureC} we provide numerical evidence that all 3 cases of Theorem~A take place for different parameters. {A rigorous proof of the existence of parameters for which all three cases hold is work in progress and will appear in a manuscript which is currently under preparation.}

\begin{teoremA}

Fix $a\in\dis^*$ and let $\lambda\in\com^*$. There exists a constant $\mathcal{C}(a)$ such that if $|\lambda|<\mathcal{C}(a)$ and $c_-(a,\lambda)\in A(\infty)$, then $c_-(a,\lambda)$ belongs to a connected component $\mathcal{U}_c$ of $A(\infty)\setminus A^*(\infty)$ and exactly one of the following holds.

\begin{enumerate}[a)]
\item The Fatou component $\mathcal{U}_c$ is simply connected. All Fatou components of $B_{a, \lambda}$ have connectivity 1 or 2 (see Figure~\ref{dynamfigureAB} (a) and (b)).
\item The Fatou component $\mathcal{U}_c$ is multiply connected and does not surround $z=0$. All Fatou components of $B_{a, \lambda}$ have connectivity 1, 2 or 3 (see Figure~\ref{dynamfigureAB} (c) and (d)).
\item The Fatou component $\mathcal{U}_c$ is multiply connected and surrounds $z=0$. All Fatou components of $B_{a, \lambda}$ have finite connectivity but there are components of arbitrarily large connectivity (see Figure~\ref{dynamfigureC}).
\end{enumerate} 
\end{teoremA}

\begin{figure}[p]
    \centering
    
    \subfigure[\scriptsize{Dynamical plane of a map {$B_{a,\lambda}$} for which statement \textit{a)} of Theorem~A holds.}]{
    \includegraphics[width=200pt]{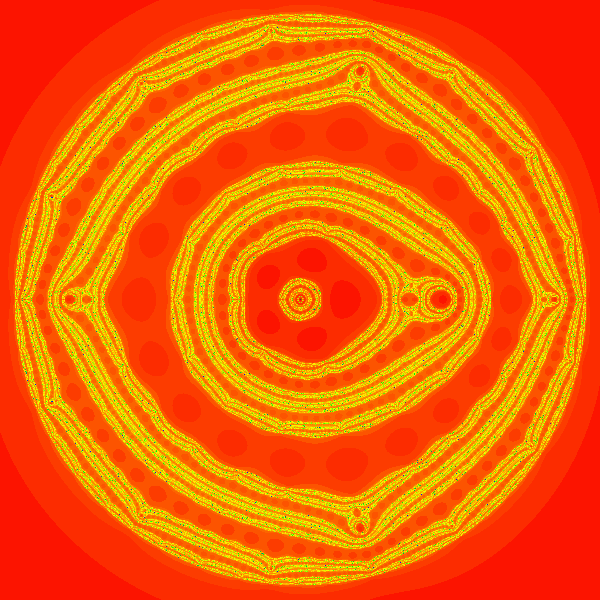}}
    \hspace{0.1in}
    \subfigure[\scriptsize{Zoom in Figure (a). } ]{
    \def\svgwidth{200pt}
    \subimport{figures/}{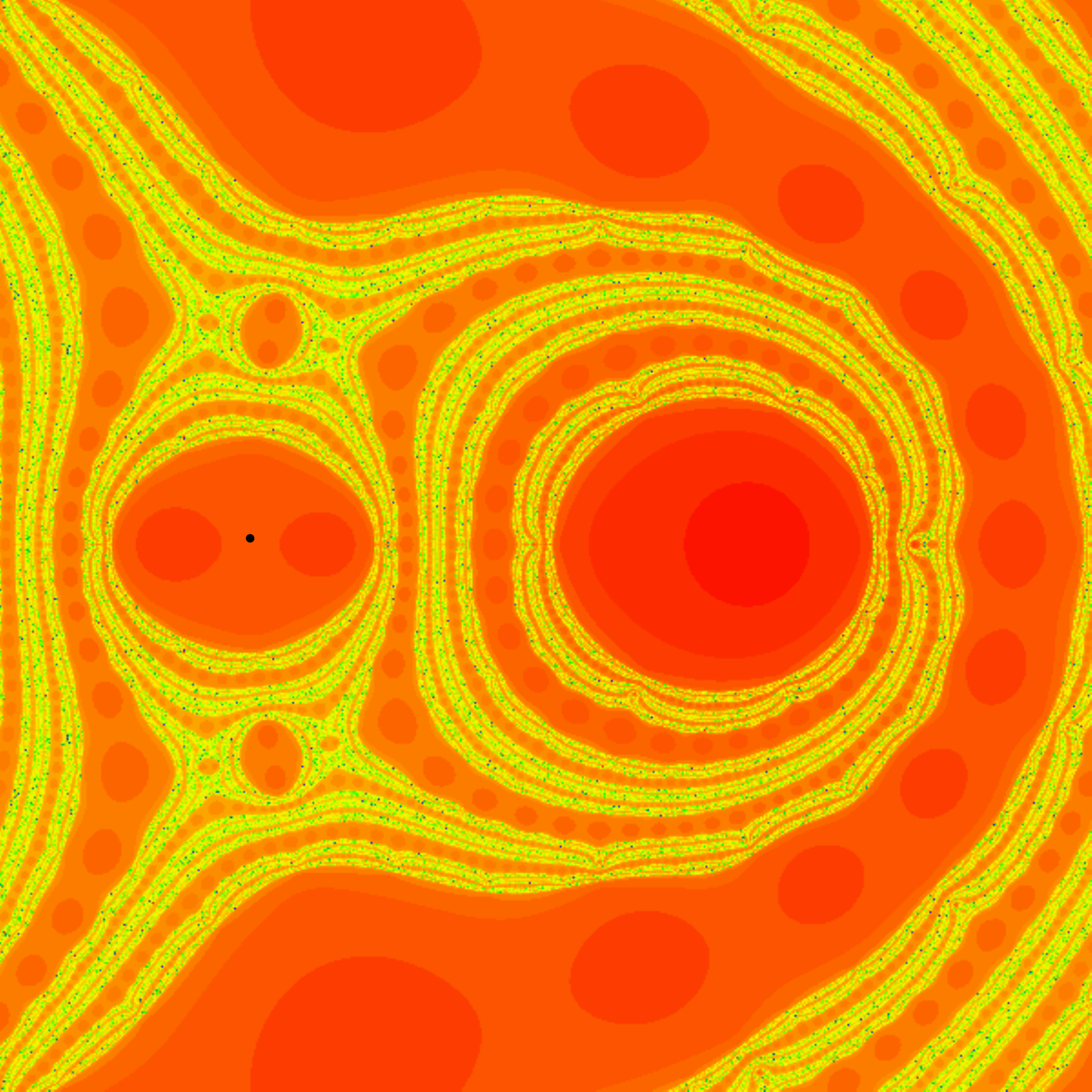_tex}}

    \subfigure[\scriptsize{Dynamical plane of a map {$B_{a,\lambda}$} for which statement \textit{b)} of Theorem~A holds.}]{
    \includegraphics[width=200pt]{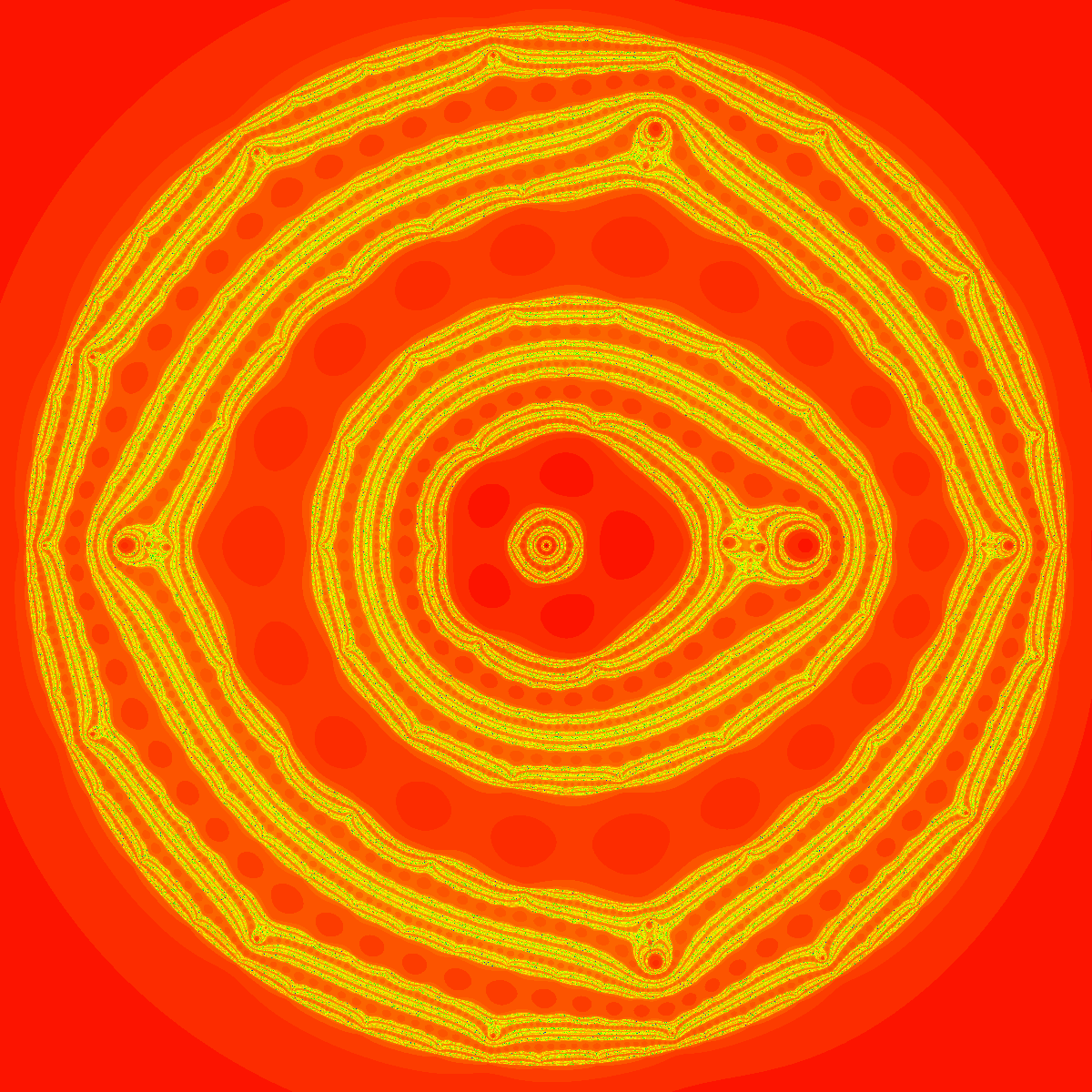}}
    \hspace{0.1in}
    \subfigure[\scriptsize{Zoom in Figure (c).}]{
    \def\svgwidth{200pt}
    \subimport{figures/}{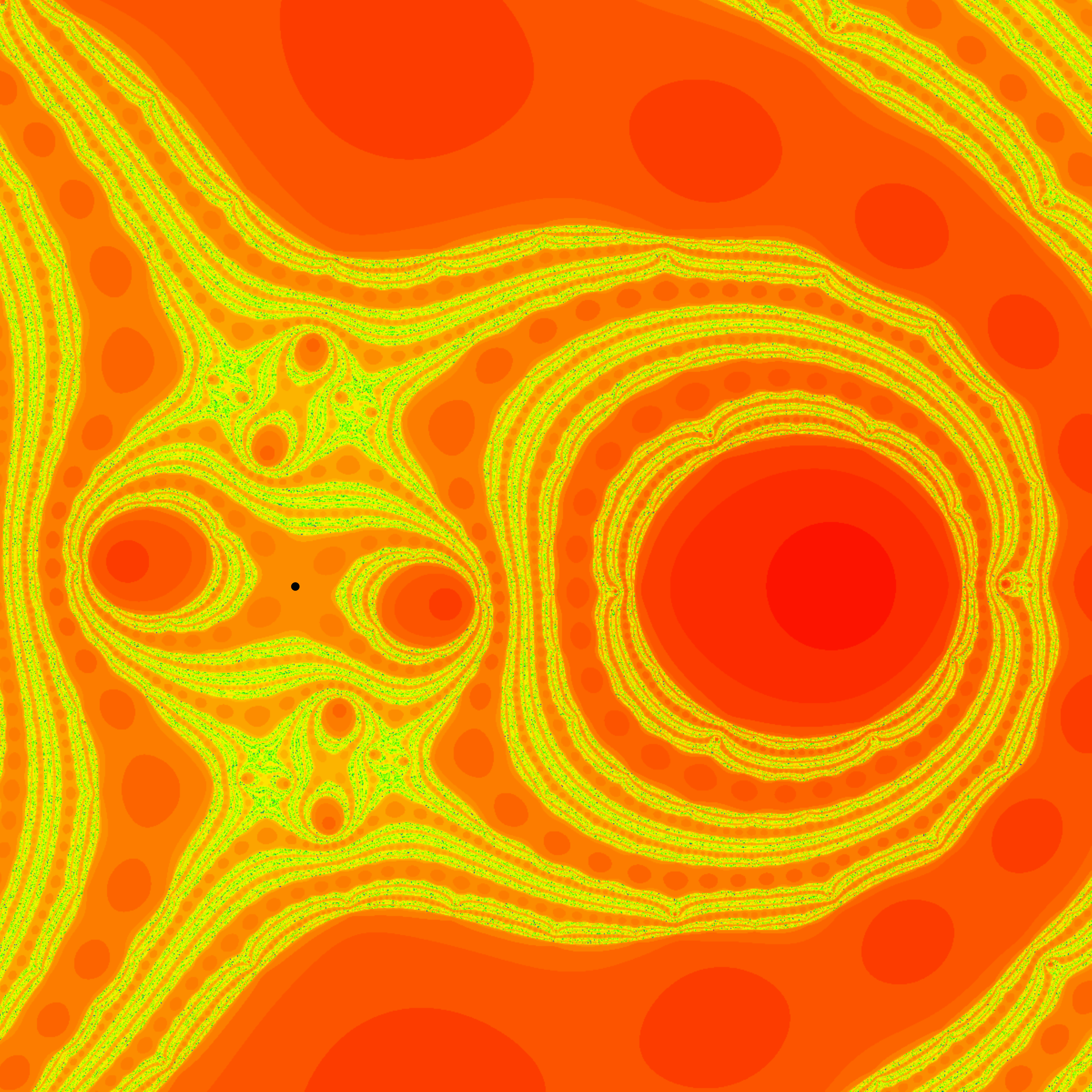_tex}}
    
    \caption{\small  Figures (a) and (b) represent the dynamical plane of the map {$B_{a, \lambda}$} where $a=0.5$ and $\lambda=3.022\times 10^{-5}$. Figures (c) and (d)  represent the dynamical plane of the map {$B_{a, \lambda}$} where $a=0.5$ and $\lambda=2.8\times10^{-5}+8.4\times 10^{-7}i $. These maps correspond to  singularly perturbed Blaschke products for which statements \textit{a)} and \textit{b)} of Theorem~A hold. The colours are as follows. We use a scaling from yellow to red to plot the basin of attraction of $z=\infty$. An approximation of the Julia set may be observed in yellow. }

    \label{dynamfigureAB}
\end{figure}

\begin{figure}[hbt!]
\centering
    \subfigure{
    \includegraphics[width=200pt]{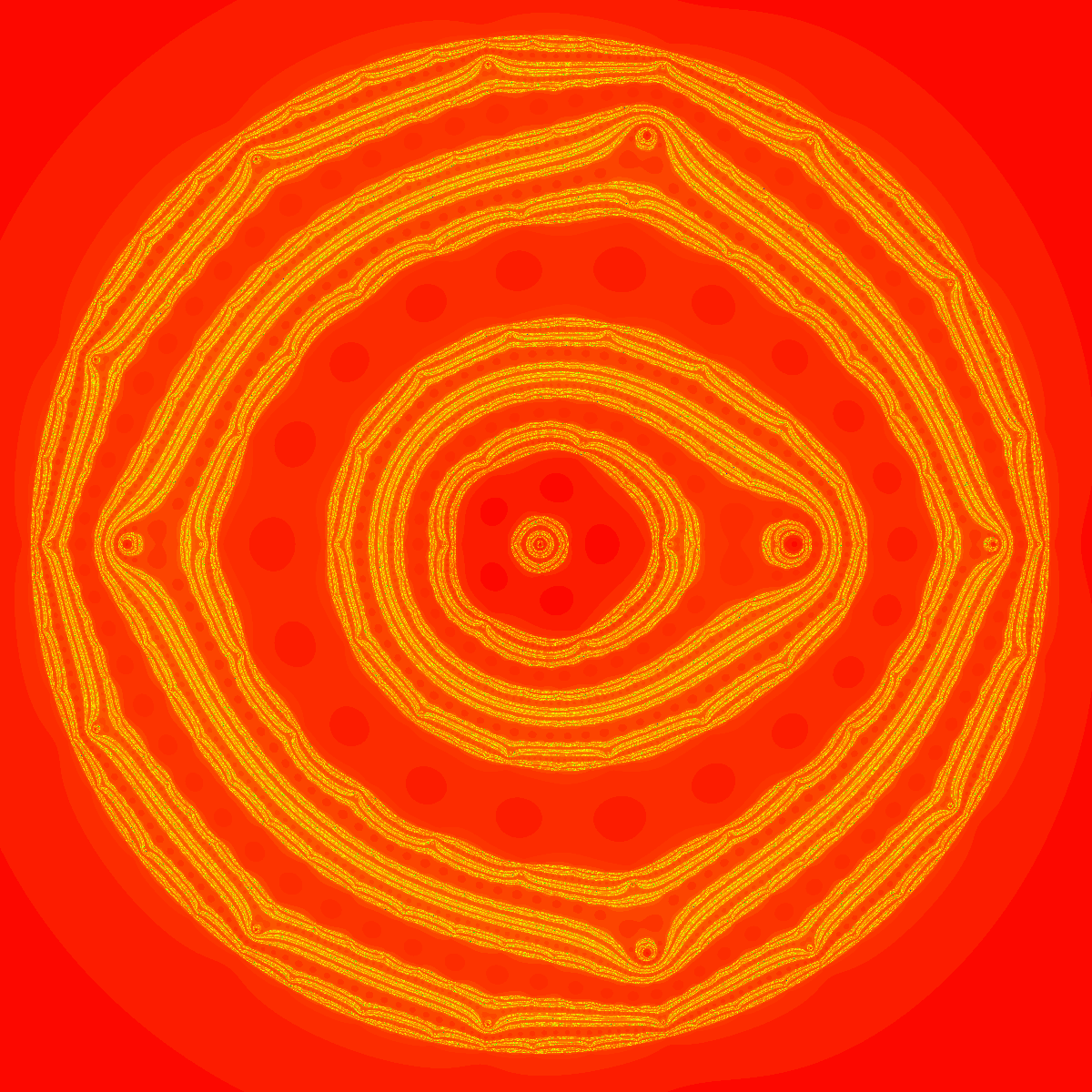}}
    \hspace{0.1in}
    \subfigure{
    \def\svgwidth{200pt}
    \subimport{figures/}{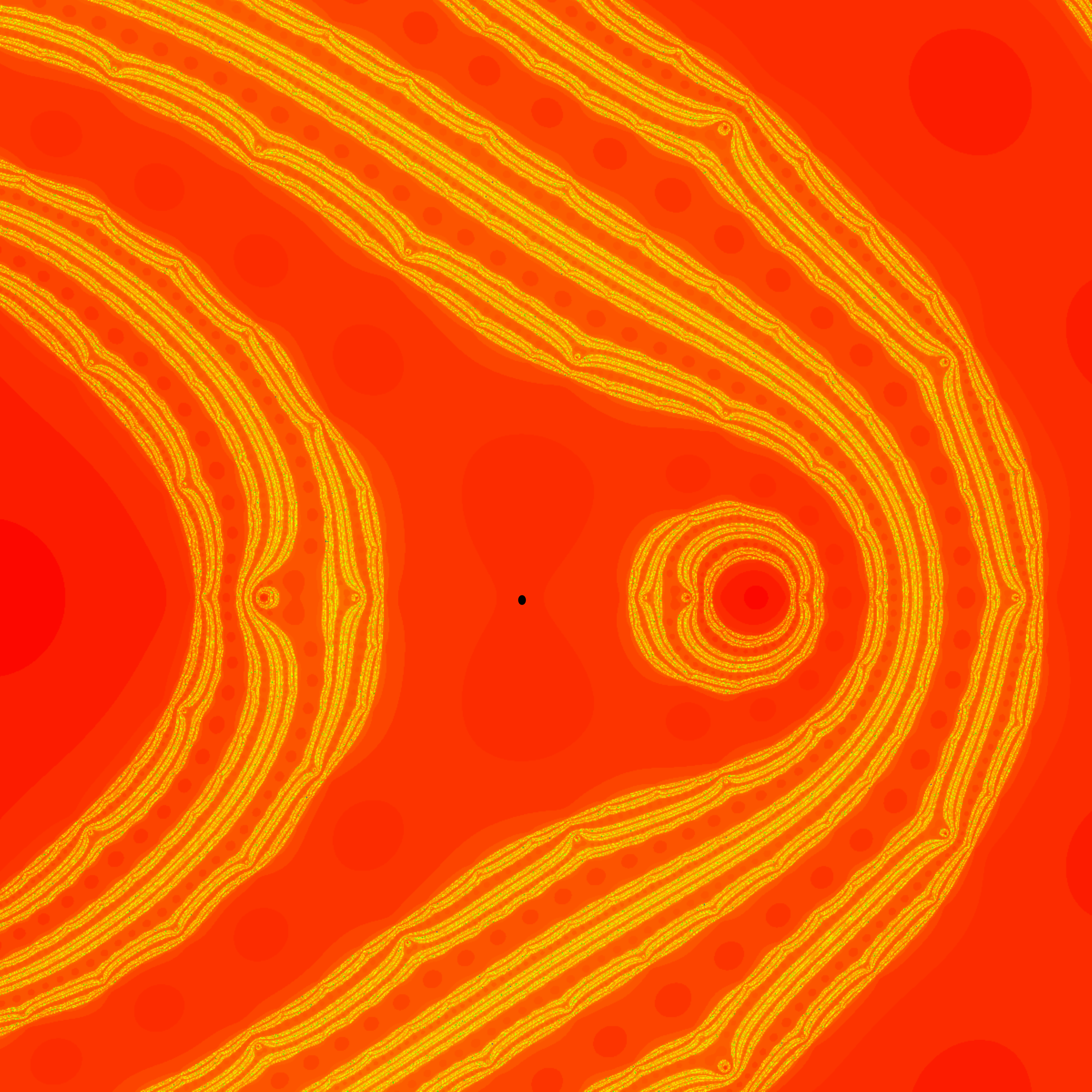_tex}}
    
    \caption{\small  The left corresponds to the dynamical plane of the map {$B_{a, \lambda}$} where $a=0.5$ and $\lambda=10^{-5}$. The right figure is a magnification of the left one. Statement \textit{c)} of Theorem~A holds for this map. Colours are as in Figure~\ref{dynamfigureAB}.  }

    \label{dynamfigureC}
\end{figure}

 Figure~\ref{parameterspace} shows the parameter space of {$B_{a,\lambda}$} for $a=0.5$ and $|\lambda|$ small. We observe hyperbolic components of different connectivities which presumably correspond to the three different cases in Theorem A. More specifically, numerical exploration shows that the red annular regions which surround $\lambda=0$ correspond to parameters for which statement \textit{c)} holds, that the red simply connected regions correspond to parameters for which statement \textit{a)} holds, and that the red annular regions which surround simply connected regions correspond to parameters for which statement \textit{b)} holds. A detailed study of the parameter plane of the family for $a\in\dis^*$ and $|\lambda|$ small is work in progress. 
 
\begin{figure}[hbt!]
\centering
    \subfigure{
    \includegraphics[width=200pt]{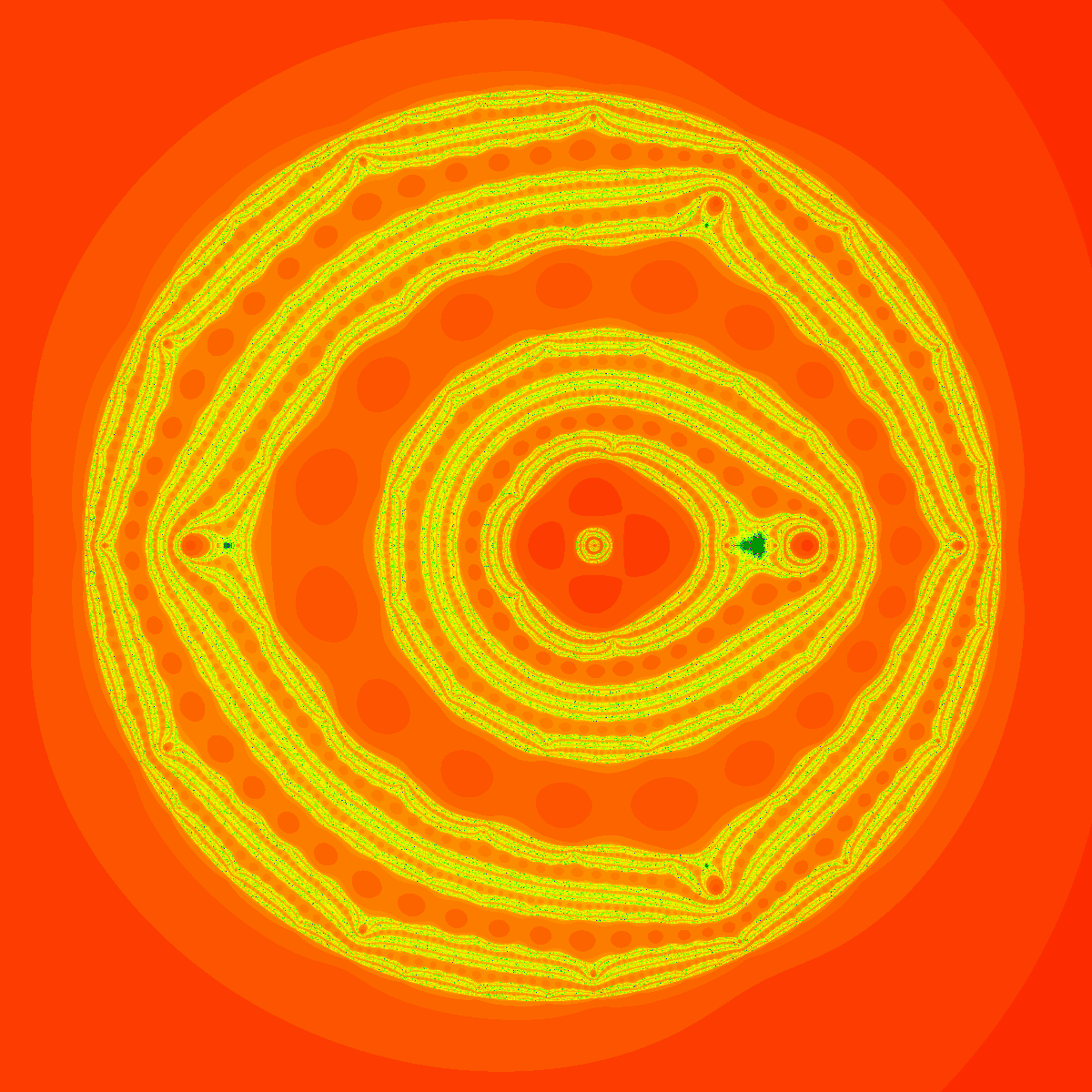}}
    \hspace{0.1in}
    \subfigure{
   \includegraphics[width=200pt]{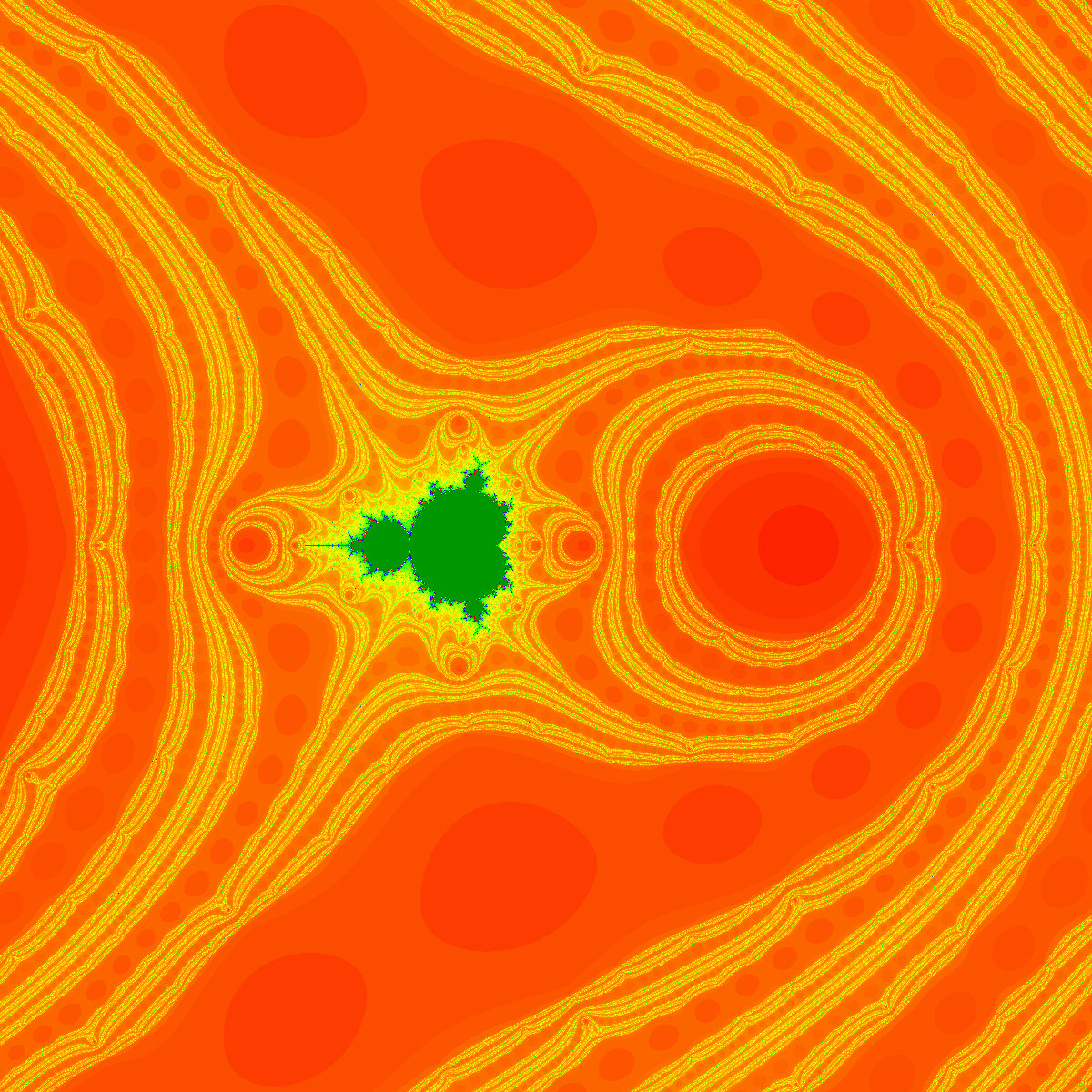}}
    
    \caption{\small The left figure corresponds to the parameter space of the family {$B_{a,\lambda}$} for $a=0.5$, $\re(\lambda)\in(-8.7\times 10^{-5}, 7.3\times10^{-5})$ and $\im(\lambda)\in(-8\times 10^{-5},8\times10^{-5})$. The right figure is a magnification of the left one. The colours are as follows. We use a scaling from yellow to red to plot parameters such that $c_-\in A(\infty)$ and green otherwise.   }

    \label{parameterspace}
\end{figure} 
 
The third case of Theorem A provides dynamical planes with other interesting properties. Our next theorem shows that Cantor sets of quasicircles  and uncountably many point components appear in the Julia set in these situations. Even if the result is similar to those in other examples in the literature (c.f.\ \cite{BDGR} and \cite{GMR}), we shall see in the proof that the dynamics of these maps are quite different.
 
%%We finish the paper focusing on the study of the maps $B_{a,\lambda}(z)$ in the case that the point \textit{ c)} of Theorem~A holds.  We prove Theorem~B, which describes the Julia sets of these maps. Even if the result is similar to the ones presented in \cite{BDGR} and \cite{GMR}, the dynamics of the maps are very different. This is because of the existence of Fatou components of connectivity greater than 2. As the reader may see in the proof of Theorem~B, these multiply connected components  of the Fatou set bound small preperiodic copies of the main Cantor set of quasicircles which is obtained similarly to McMullen's Julia set.

\begin{teoremB}
Fix $a\in\dis^*$ and let  $\lambda\in\com^*$. Assume that $|\lambda|<\mathcal{C}(a)$, that $c_-(a,\lambda)\in A(\infty)$ and  that $c_-(a,\lambda)$ lies in a multiply connected Fatou component which surrounds $z=0$. Then the Julia set of {$B_{a,\lambda}$} contains a countable union of Cantor sets of quasicircles and uncountably many point components.

\end{teoremB}

We want to remark that if statements \textit{a)} or 	\textit{b)} of Theorem~A hold then the situation is essentially different. In this scenario, we can find annular Fatou domains bounded by quasicircles as described by McMullen in  \cite{McM1}. However, in this case they cannot lead to a Cantor set of quasicircles in the same way. This is because of the Fatou components which are homeomorphic to disks. They are to be found between McMullen's annuli together with other doubly (and triply) connected Fatou components (see Figure~\ref{dynamfigureAB}). This fact avoids McMullen's annuli to accumulate forming a Cantor set of quasicircles.

The paper is structured as follows. In Section~\ref{prelim} we describe the dynamics of the unperturbed Blaschke products {$B_a$}  and investigate the local dynamics of the maps $B_{a,\lambda}(z)$ around $z=0$ and the configuration of zeros and critical points (see Theorem~\ref{thmcritzeros}). In Section~\ref{sectionFatou} we study the Fatou components of the singularly perturbed Blaschke products and prove Theorem~A as a restatement of Theorem~\ref{thmtrichotomy}. Finally, in Section~\ref{sectionJulia} we investigate the Julia sets of the maps {$B_{a,\lambda}$} proving Theorem~B.

 \textit{Acknowledgements.} The author would like to thank X.~Jarque for the original idea which lead to this paper. He would also like to thank  N.~Fagella, A.~Garijo and the referee for their many and useful comments which greatly improved this paper.

\section{Preliminaries}\label{prelim}

The goal of this section is to understand the local dynamics which take place near $z=0$ for the singular perturbations of rational maps $B_{a,\lambda}(z)$ (Equation (\ref{perturbedblasequation})). In section~\ref{sectionblas} we study the dynamics of the Blaschke products $B_{a}(z)$ (Equation (\ref{blasequation})). In Section~\ref{localdynam} we prove Theorem~\ref{thmcritzeros}, which is the main result of the preliminaries. It describes the local dynamics around $z=0$ as well as the configurations of critical points of zeros. It also analyses the topology of the immediate basin of attraction of $z=\infty$, $A^*(\infty)$, and introduces notation which is used along the paper.

\subsection{Dynamics of the Blaschke products}\label{sectionblas}

We consider the family of Blaschke products of the form $B_{a}(z)=z^3(z-a)/(1-\overline{a}z)$  where $ z\in\wcom$ and $a\in\dis^*$.  The dynamics of these maps was studied in \cite{CFG1} and \cite{CFG2} for $a\in\com$.  As all Blaschke products, they leave the unit circle $\cercle$ invariant. Moreover,  $z=0$ and $z=\infty$ are superattracting fixed points of local degree 3 and, therefore, are critical points of multiplicity 2.
Given the fact that these rational maps have degree $4$, they have $6$ critical points counted with multiplicity. The other two  critical points, denoted by $c_{\pm}$, are given by

$$
c_{\pm}:=c_{\pm}(a):=a \cdot \frac{1}{3|a|^2}\left(2+|a|^2\pm\sqrt{(|a|^2-4)(|a|^2-1)}\right).
$$

If $a\in\dis^*$, then the critical points satisfy $c_-\in\dis$ and $c_+\in\com\setminus\overline{\dis}$. Moreover, since the only pole $z_{\infty}=1/\overline{a}$ does not lie in $\dis$, we have that $B_a(\dis)=\dis$ and all points in $\dis$ belong to the immediate basin of attraction of $z=0$, $A^*(0)$. Analogously we also have $A^*(\infty)=\wcom\setminus\overline{\dis}$.  We finish this first part of the preliminaries with Proposition~\ref{corbesBlaschke}, which will be used in the next subsection. During the proof we will use the Riemann-Hurwitz formula (c.f.\ \cite{Mi1, Ste}), which can be stated as follows. 

\begin{teor}[Riemann-Hurwitz formula]\label{riemannhurwitz}
Let $U$ and $V$ be two connected domains  of $\widehat{\com}$ of finite connectivity $m_{U}$ and $m_{V}$ and let $f:U\rightarrow V$ be a degree $k$ proper map branched over $r$ critical points counted with multiplicity. Then
$$m_{U}-2=k(m_{V}-2)+r.$$
\end{teor}

\begin{propo}\label{corbesBlaschke}
Fixed $a\in\dis^*$, there are analytic Jordan curves $\gamma_0$, $\gamma_0^{-1}$, $\gamma_{\infty}$ and $\gamma_{\infty}^{-1}$ such that:

\begin{enumerate}[a)]
\item The curves $\gamma_0$ and $\gamma_0^{-1}$ belong to $\dis$ and surround $z=0$. The curve $\gamma_0^{-1}$ is mapped onto $\gamma_0$ with degree 4 under {$B_a$}. Moreover, we have $\gamma_0\subset \rm{Int}(\gamma_0^{-1})$ and $z_0, c_-\in \rm{Int}(\gamma_0^{-1})$, where $\rm{Int}(\gamma_0^{-1})$ denotes the bounded component of $\com\setminus\gamma_0^{-1}$.

\item The curves $\gamma_{\infty}$ and $\gamma_{\infty}^{-1}$ belong to $\com\setminus\overline{\dis}$ and surround the unit disk $\dis$. The curve $\gamma_{\infty}^{-1}$ is mapped onto $\gamma_{\infty}$ with degree 4 under {$B_a$}. Moreover, we have $\gamma_{\infty}\subset \rm{Ext}(\gamma_{\infty}^{-1})$ and $z_{\infty}, c_+\in \rm{Ext}(\gamma_{\infty}^{-1})$, where $\rm{Ext}(\gamma_{\infty}^{-1})$ denotes the unbounded component of $\com\setminus\gamma_{\infty}^{-1}$.

\end{enumerate}
\end{propo}

\proof
We show how to obtain $\gamma_0$ and $\gamma_0^{-1}$. The curves $\gamma_{\infty}$ and $\gamma_{\infty}^{-1}$ can be obtained in a similar way. Let $U$ be the maximal domain of definition of the Böttcher coordinate of the superattracting fixed point $z=0$ (see \cite{Mi1}). Then $\partial U$ contains the critical point $c_-$. Moreover, there is an extra component $V$ of $B_a^{-1}(B_a(U))$ attached to the critical point $c_-$ which contains the zero $z_0$ (see Figure~\ref{corbesbottcher}).

To finish the proof it is enough to take an analytic Jordan curve $\gamma_0$ surrounding $\overline{B_a(U)}$ {such that $\gamma_0\subset U \setminus \overline{B_a(U)}$}. We want to show that any such curve satisfies that there is a unique component $\gamma_0^{-1}$ of $B_a^{-1}(\gamma_0)$, which surrounds the set $\overline{U \cup V}$ and is mapped with degree 4 onto $\gamma_0$. Let $A$ be the annulus bounded by $\gamma_0$ and $\cercle$ and let $A^{-1}=B_a^{-1}(A)$. Since $\cercle$ is completely invariant, $A^{-1}$ consists of a unique connected component. It follows from the Riemann-Hurwitz formula (Theorem~\ref{riemannhurwitz}) that $A^{-1}$ is also an annulus since $A$ contains no critical value (i.e.\ the image of a critical point) and, therefore, $A^{-1}$ contains no critical point. The annulus $A^{-1}$ is bounded by $\cercle$ and an analytic Jordan curve, say $\gamma_0^{-1}$, which is mapped onto $\gamma_0$ under $B_a$ and surrounds $\overline{U \cup V}$. Given that $B_a|_{A^{-1}}$ is proper,  {$B_a$} maps $A^{-1}$ onto $A$ with a certain degree $d$. Since $B_a|_{\cercle}$ has degree 4, we conclude this degree $d$ is precisely  $4$ and that {$B_a$} maps $\gamma_0^{-1}$ onto $\gamma_0$ with degree 4. Consequently, there can be no other preimage of $\gamma_0$.
\endproof

\begin{figure}[hbt!]
\centering
\def\svgwidth{240pt}
\subimport{figures/}{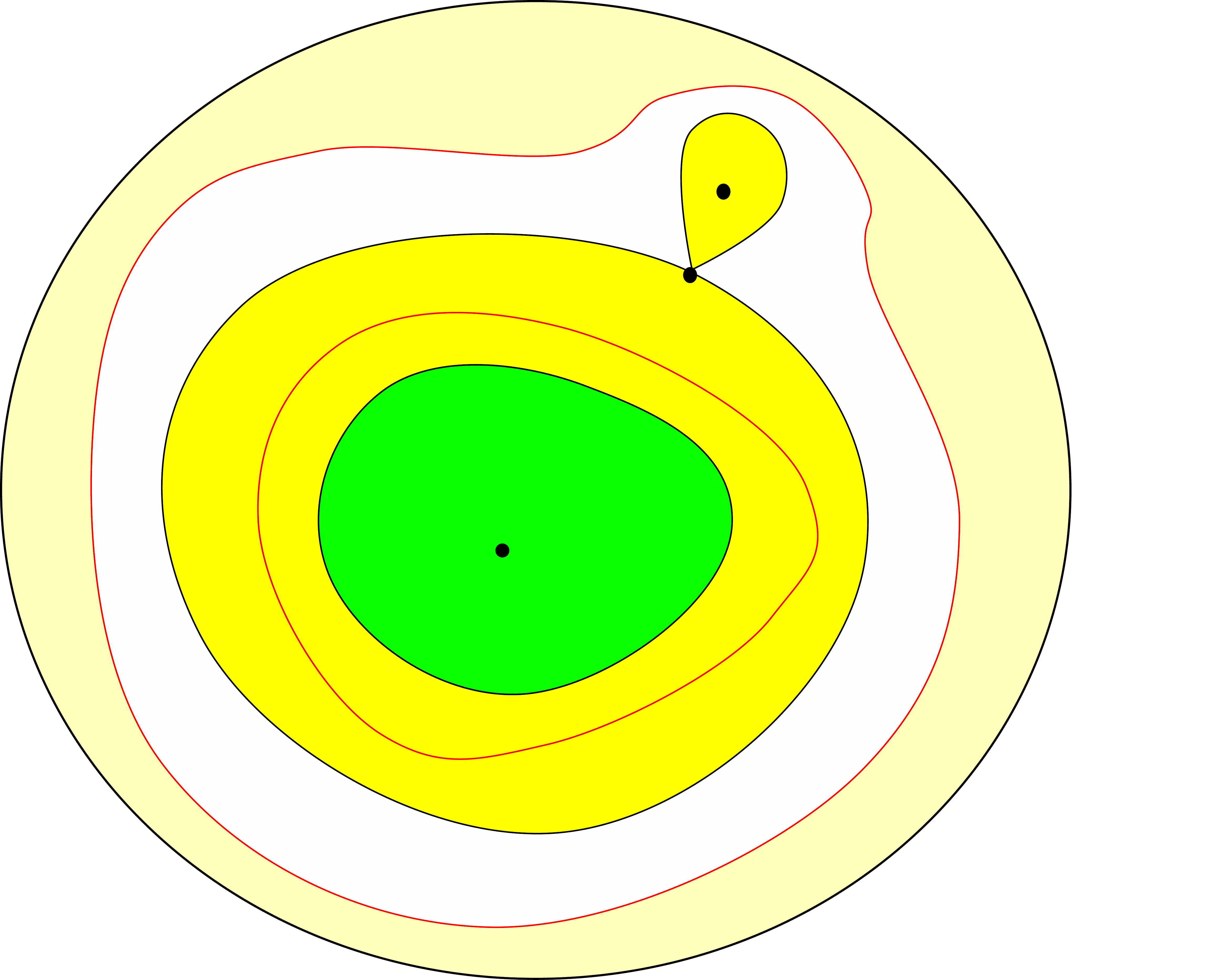_tex}
\caption{\small Scheme of the sets described in the proof of  Proposition~\ref{corbesBlaschke}. }
\label{corbesbottcher}
\end{figure}

\subsection{Local dynamics of the singular perturbations near the origin}\label{localdynam}

To understand the global dynamics of the singularly perturbed maps $B_{a,\lambda}(z)$ (Equation~(\ref{perturbedblasequation})), it is important to control the orbits of the critical points and to know where the preimages of $z=0$ and $z=\infty$ are. The goal of this section of the preliminaries is to locate them and use them to understand the local dynamics around $z=0$ for $|\lambda|$ small (see Theorem~\ref{thmcritzeros}).

As all degree 6 rational maps, the functions $B_{a,\lambda}$ have $2\cdot 6 -2 = 10$ critical points and 6 preimages of $z=0$ and $z=\infty$, counted with multiplicity. The point $z=\infty$ is a superattracting fixed point of local degree $3$, so it is a critical point of multiplicity $2$ and a preimage of itself of multiplicity $3$. Some of the other critical points, zeros and poles can be found by analytic continuation of the ones of $B_{a}(z)$ (Equation~(\ref{blasequation})). Indeed, if we fix $a\in\dis^*$, then the maps $B_{a,\lambda}(z)$ depend analytically on $\lambda$ for all $z$ in $\wcom\setminus \overline{\dis_{\epsilon}}$, where $\dis_{\epsilon}$ denotes the disc centred at 0 with radius $\epsilon$ as small as desired. Therefore, we have two free critical points $c_+(a,\lambda)$ and $c_-(a,\lambda)$, a zero $z_0(a,\lambda)$ and a pole $z_{\infty}(a,\lambda)=1/\overline{a}$ which are analytic continuation of the critical points $c_+$ and $c_-$, the zero $z_0$ and the pole $z_{\infty}$ of the Blaschke product $B_a(z)$. We shall drop the dependence on $a$ and $\lambda$ of $c_+(a,\lambda)$, $c_-(a,\lambda)$, $z_0(a,\lambda)$ and $z_{\infty}(a,\lambda)$ whenever it is clear from the context. 

There are 6 critical points, 5 zeros and 2 poles which are not to be found by analytic continuation of the ones of {$B_a$}. They appear in a small neighbourhood of $z=0$. Because of the term $\lambda/z^2$ of $B_{a,\lambda}(z)$, the point $z=0$ is a double preimage of infinity and a critical point of multiplicity 1. Therefore, there are only 5 zeros and 5 critical points whose location we have not described yet. We may approximate the values of these zeros and critical points using the fact that they are fixed points of certain operators (c.f.\ \cite{GMR}).

\begin{propo}\label{zeroscrit}
If we fix $a\in\dis^*$, then {$B_{a,\lambda}$} has 5 zeros of the form $\xi(\lambda/a)^{1/5}+o(\lambda^{1/5})$, where $\xi$ denotes a fifth root of the unity and $o(\lambda^{1/5})$ is such that $\lim_{\lambda\rightarrow 0} |o(\lambda^{1/5})|/|\lambda^{1/5}|=0$. Moreover, {$B_{a,\lambda}$} has  5 critical points  of the form $-\xi(2\lambda/3a)^{1/5}+o(\lambda^{1/5})$.
\end{propo}
\proof

We first prove the proposition for  the zeros of {$B_{a,\lambda}$}. They  are solutions of
$$z^3\frac{z-a}{1-\overline{a}z}+\frac{\lambda}{z^2}=0 \rightarrow z^5=-\lambda\frac{1-\overline{a}z}{z-a}.$$

For $\lambda=0$, the last equation has $z=0$ as a solution of multiplicity 5. If we increase $\lambda$, by continuity, we will have 5 zeros near $z=0$. These zeros are fixed points of the operators

$$T_{\xi}(z)=\xi\sqrt[5]{-\lambda\frac{1-\overline{a}z}{z-a}}=\xi\lambda^{1/5}\sqrt[5]{-\frac{1-\overline{a}z}{z-a}}=\xi\lambda^{1/5}R(z).$$

Notice that $R(z)$ does not depend on $\lambda$. We have 5 different choices for the operators $T_{\xi}$ given by the choice of the fifth root of the unity, denoted by $\xi$. Fix any of the five choices. Then one of the five zeros which appear around zero is a fixed point of $T_{\xi}$, say $w_{\lambda,\xi}$. We can approximate $w_{\lambda,\xi}$ by $T(0)=\xi(\lambda/a)^{1/5}$. Hence, we have

\begin{align*}
|w_{\lambda,\xi}-\xi(\lambda/a)^{1/5}|=|T(w_{\lambda,\xi})-T(0)|&{\leq \sup_{\eta\in\left[0,w_{\lambda,\xi}\right]}|T'(\eta)|\cdot|w_{\lambda,\xi}-0|}\\
&{= \sup_{\eta\in\left[0,w_{\lambda,\xi}\right]}|\lambda|^{1/5}|R'({\eta})|\cdot|w_{\lambda,\xi}|}.
\end{align*}

 {Moreover, we can find an upper bound of  $|R'({\eta})|$, $\eta\in\left[0,w_{\lambda,\xi}\right]$, for $|\lambda|$ small enough. Indeed, the function $R(z)$ does not depend on $\lambda$ and, if $|\lambda|$ is small enough, the points $z=a$ and $z=1/\overline{a}$  are bounded away from the segment $\left[0, w_{\lambda,\xi}\right]$ given that $w_{\lambda,\xi}\rightarrow 0$ as $\lambda\rightarrow 0$. Therefore, there is $\epsilon>0$ such that if $|\lambda|<\epsilon$,  then $\sup_{\eta\in\left[0,w_{\lambda,\xi}\right]}|R'(\eta)|<C$, where $C$ does not depend on $\lambda$. Finally,}

{$$\lim_{\lambda\rightarrow 0}\frac{|w_{\lambda,\xi}-\xi(\lambda/a)^{1/5}|}{|\lambda^{1/5}|}\leq  \lim_{\lambda\rightarrow 0}C\cdot|w_{\lambda,\xi}|=0.$$}

The proof for the  critical points is analogous using that they are solutions  of the equation

$$3z^2\frac{z-a}{1-\overline{a}z}+z^3\frac{1-|a|^2}{(1-\bar{a}z)^2}-2\frac{\lambda}{z^3}=0$$

\noindent and fixed points of the operators

$$T_{\xi}(z)=\xi(2\lambda)^{1/5}\sqrt[5]{\frac{1-\overline{a}z}{3(z-a)+z\frac{1-|a|^2}{1-\bar{a}z}}},$$

\noindent where $\xi$ denotes a fifth root of the unity. As before we can approximate the critical points by $T_{\xi}(0)=-\xi(2\lambda/3a)^{1/5}$.
\endproof

The next proposition follows directly from the previous result using the expressions of the zeros and critical points. The proof is straightforward. 

\begin{propo}\label{annuluszeroscrit}
Fix $a\in\dis^*$. Then, if $|\lambda|$ is small enough, the 5 critical points and the 5 zeros which appear around $z=0$ belong to the annulus of inner radius $\left(\frac{|\lambda|}{2|a|}\right)^{1/5}$ and outer radius $\left(2\frac{|\lambda|}{|a|}\right)^{1/5}$.
\end{propo}

The main result of the preliminaries is the following theorem. It describes the local dynamics that takes place near $z=0$ for $|\lambda|$ small. It also describes the topology of the immediate basin of attraction of infinity, $A^*(\infty)$. 

\begin{thm}\label{thmcritzeros}

Fix $a\in\dis^*$ and let $\lambda\in\com^*=\com\setminus\{0\}$. Then, there is a constant $\mathcal{C}(a)$ such that if $|\lambda|<\mathcal{C}(a)$  the following hold:

\begin{enumerate}[a)]
\item The immediate basin of attraction of $\infty$, $A^*(\infty)$,  is simply connected and $\partial A^*(\infty)$ is a quasicircle. Moreover, $A^*(\infty)$ is mapped with degree $4$ onto itself and contains only a pole $z_{\infty}$ and a critical point $c_+$ other than the superattracting fixed point $z=\infty$.

\item There is a simply connected neighbourhood $T_0$ of $z=0$ which is mapped 2 to 1 onto $A^*(\infty)$.

\item There is an open annulus $A_0$ which contains 5 critical points and 5 preimages of $z=0$ and is mapped 5 to 1 onto $T_0$.

\item The annular region in between $A_0$ and $ A^*(\infty)$ contains a critical point $c_-$ and a zero $z_0$. Moreover, the component $D_{0}$ of $B_{a,\lambda}^{-1}(T_0)$ in which  $z_0$ lies is simply connected and is mapped with degree 1 onto $T_0$. Consequently, it does not contain the critical point $c_-$.

\end{enumerate}
\end{thm}

\begin{figure}[hbt!]
\centering
\def\svgwidth{250pt}
\subimport{figures/}{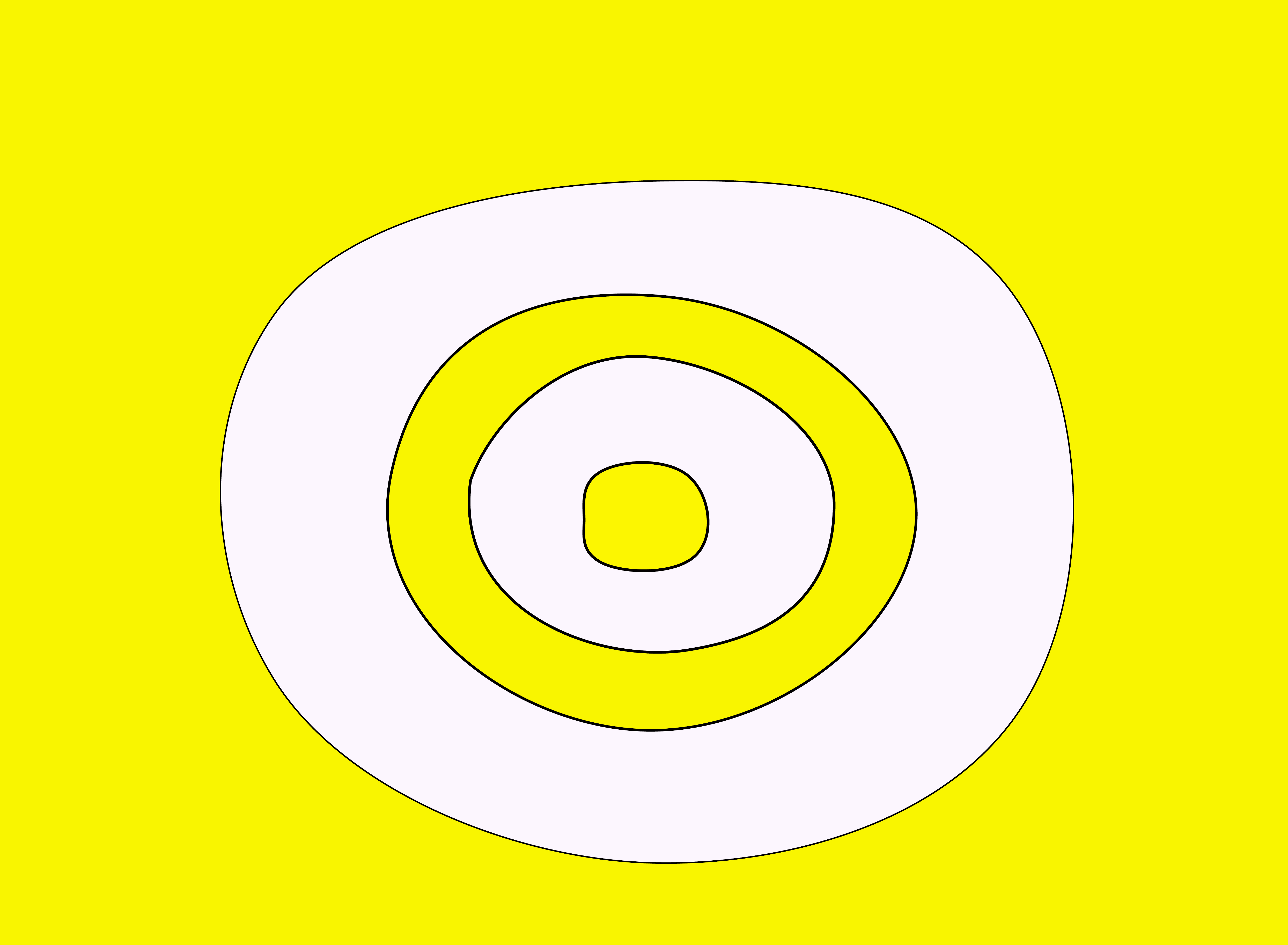_tex}
\caption{\small Scheme of the dynamics described in Theorem~\ref{thmcritzeros}. We draw in red the preimages of zero and in black the critical points. }
\label{esquema1}
\end{figure}

%% We want to point out that we plot the annulus $A_0$ big enough so that the picture can be understood even if it shrinks very fast as $\lambda$ tends to 0 (c.f.\ Proposition~\ref{annuluszeroscrit}).

The conclusions of Theorem~\ref{thmcritzeros} are summarized in Figure~\ref{esquema1}. The proof of Theorem~\ref{thmcritzeros} is structured as follows.
We first prove statement \textit{a)}. Afterwards we give a criterion which guarantees that a point $z$ belongs to the preimage $T_0$ of $A^*(\infty)$ (Lemma~\ref{preiminfinit}). Finally we use the previous results and Proposition~\ref{annuluszeroscrit} to prove the remaining statements of Theorem~\ref{thmcritzeros}.

\begin{comment}
\begin{propo}\label{concainfinit}
Let $a\in\dis^*$. If $|\lambda|$ is small enough,  then the immediate basin of attraction of $\infty$, $A^*(\infty)$,  is simply connected and $\partial A^*(\infty)$ is a quasicircle. Moreover, $A^*(\infty)$ is mapped with degree $4$ onto itself and contains the pole $z_{\infty}$ and the critical point $c_+$ and no other than  the superattracting fixed point $z=\infty$.
\end{propo}
\end{comment}
\begin{proof}[Proof of  statement a) of Theorem~\ref{thmcritzeros}]

{Let $a\in\dis^*$. We prove that if $|\lambda|$ is small enough, then statement \textit{a)} holds.} Take the curves $\gamma_0$ and $\gamma_{\infty}$ from Proposition~\ref{corbesBlaschke}. Then, it follows from continuity with respect to $\lambda$ that if $|\lambda|$ is small enough then we can take connected components $\gamma_{0,\lambda}^{-1}$ and $\gamma_{\infty,\lambda}^{-1}$ of $B_{a,\lambda}^{-1}(\gamma_0)$ and $B_{a,\lambda}^{-1}(\gamma_{\infty})$, respectively, such that the following hold:

\begin{itemize}
\item The sets  $\gamma_{0,\lambda}^{-1}$ and $\gamma_{\infty,\lambda}^{-1}$ are analytic Jordan curves.

\item The application {$B_{a,\lambda}$} maps  $\gamma_{0,\lambda}^{-1}$ and $\gamma_{\infty,\lambda}^{-1}$ onto $\gamma_0$ and $\gamma_{\infty}$ with degree 4, respectively.

\item The curves satisfy $\gamma_0\subset \rm{Int}\left(\gamma_{0,\lambda}^{-1}\right)$ and $\gamma_{\infty}\subset \rm{Ext}\left(\gamma_{\infty,\lambda}^{-1}\right)\subset A^*(\infty)$.

\item There is no zero, pole or critical point in the annular region bounded by $\gamma_{0,\lambda}^{-1}$ and $\gamma_{\infty,\lambda}^{-1}$. Moreover, {$B_{a,\lambda}$}  maps the annulus bounded by $\gamma_{0,\lambda}^{-1}$ and $\gamma_{\infty,\lambda}^{-1}$ onto the annulus bounded by $\gamma_{0}$ and $\gamma_{\infty}$ with degree $4$.

\item We have the inclusion $\rm{Ext}\left(\gamma_{\infty,\lambda}^{-1}\right)\subset A^*(\infty)$. Moreover, $\rm{Ext}\left(\gamma_{\infty,\lambda}^{-1}\right)$ contains the pole $z_{\infty}$ and the critical point $c_+$ and no other than $z=\infty$.

\end{itemize}

Indeed, all of these properties were satisfied by the curves obtained in Proposition~\ref{corbesBlaschke}. By continuity, all of them are still satisfied as long as the perturbation is small enough.
%% so that $\gamma_0\cap\gamma_{0,\lambda}^{-1}=\emptyset$, $\gamma_{\infty}\cap\gamma_{\infty,\lambda}^{-1}=\emptyset$ and no critical value, i.e.\ the image of a critical point, meets the curves $\gamma_0$ and $\gamma_{\infty}$.}

We can now apply a standard quasiconformal surgery procedure to obtain a quasiregular map $F(z)$ which agrees with $B_{a,\lambda}(z)$ on the annulus bounded by $\gamma_{0,\lambda}^{-1}$ and $\gamma_{\infty,\lambda}^{-1}$ and is quasiconformally conjugate to $z\rightarrow z^4$ inside the regions $\rm{Int}\left(\gamma_{0,\lambda}^{-1}\right)$ and $\rm{Ext}\left(\gamma_{\infty,\lambda}^{-1}\right)$. In this surgery we use the curves $\gamma_0$, $\gamma_{0,\lambda}^{-1}$, $\gamma_{\infty}$ and $\gamma_{\infty,\lambda}^{-1}$ to glue the dynamics of $z^4$ in $\rm{Int}\left(\gamma_{0,\lambda}^{-1}\right)$ and $\rm{Ext}\left(\gamma_{\infty,\lambda}^{-1}\right)$. We refer to \cite{BF} for an introduction to quasiconformal surgery and to \cite[proof of Theorem 7.4]{BF} for the details on how to glue the dynamics of $z^4$. After the surgery we obtain a quasiregular map $F(z)$ such that the following hold.

\begin{itemize}

\item The map $F(z)$ is conformally conjugate to $z\rightarrow z^4$ inside the regions $\rm{Int}\left(\gamma_{0}^{-1}\right)$ and $\rm{Ext}\left(\gamma_{\infty}^{-1}\right)$ and has $z=0$ and $z=\infty$ as superattracting fixed points of local degree 4.

\item The map $F(z)$ has topological degree 4 and no other critical point than $z=0$ and $z=\infty$ in the sense that it is bijective in a neighbourhood of any point other than $z=0$ or $z=\infty$. 

\item The map $F(z)$ is conjugate to a holomorphic map $f(z)$ via a quasiconformal map $\varphi(z)$ that fixes $0$ and $\infty$, i.e.\ $f(z) = \varphi^{-1}\circ F\circ\varphi(z)$.

\end{itemize}

Since the quasiregular map $F(z)$ has topological degree 4 and has $z=0$ and $z=\infty$ as superattracting fixed points of local degree 4, then also does the holomorphic function $f(z)$. Therefore, $f(z)$ is necessarily of the form $f(z)=b z^4$ where $b\in\com^*=\com\setminus\{0\}$. Consequently, the Julia set of $f(z)$ consists of a circle $S$ (of radius $(1/|b|)^{1/3}$) which is the common boundary of the basins of attraction of $z=0$ and $z=\infty$. The image under the quasiconformal map of this circle, $\varphi(S)$, necessarily belongs to the region bounded by $\gamma_{0,\lambda}^{-1}$ and $\gamma_{\infty,\lambda}^{-1}$ since its complement belongs, by construction, to the basins of attraction of $z=0$ and $z=\infty$ under the quasiregular map $F(z)$.

We finish the proof noticing that, by construction, the immediate basin of attraction of $\infty$ under the quasiregular map {$F$}, $A_F^*(\infty)$, coincides with the immediate basin of attraction of $\infty$ under {$B_{a,\lambda}$}, $A_{B_{a,\lambda}}^*(\infty)$, and that $\partial A_F^*(\infty)=\varphi(S)$. Since $\varphi(S)$ is the image of a circle under a quasiconformal map it is, by definition, a quasicircle. We conclude that  $A_{B_{a,\lambda}}^*(\infty)$ is a simply connected domain bounded by a quascircle.

\end{proof}

The next lemma provides a criterion which guarantees that a point $z$ belongs to a preimage of $A^*(\infty)$, for $|\lambda|$ small.

\begin{lemma}\label{preiminfinit}
Let $a\in\dis^*$ and $\lambda\in \com^*$. If $|\lambda|$ is small enough, then the following hold.
\begin{enumerate}[a)]
\item We have the inclusion $\{z\in\com;\; |z|>2\}\subset A^*(\infty)$.
\item If $|z|<\left(\frac{|\lambda|}{3}\right)^{1/2}$ then $B_{a,\lambda}(z)\in A^*(\infty)$.

\end{enumerate}
\end{lemma}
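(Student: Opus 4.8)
The plan is to control the size of $B_{a,\lambda}(z)$ in two distinct regions and then invoke statement \textit{a)} of Theorem~\ref{thmcritzeros}, which guarantees $A^*(\infty)$ is the set of points escaping to $\infty$ without leaving the unbounded Fatou component, and in particular contains a neighbourhood of $\infty$. For part \textit{a)} I would first note that the unperturbed Blaschke product $B_a$ has $A^*(\infty)=\wcom\setminus\overline{\dis}$, so $\{|z|>1\}\subset A^*(\infty)$ for $\lambda=0$, and more quantitatively on $\{|z|\geq 3/2\}$ one has a definite rate of escape (using $|B_a(z)|=|z|^3\cdot|z-a|/|1-\overline a z|\geq |z|^3\cdot\frac{|z|-1}{|z|+1}$, which exceeds $|z|$ comfortably for $|z|\geq 3/2$ since $a\in\dis^*$). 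Then for $|\lambda|$ small, the extra term $\lambda/z^2$ is bounded by $|\lambda|/4$ on $\{|z|\geq 2\}$, a negligible perturbation; a short estimate shows $|B_{a,\lambda}(z)|\geq 2$ whenever $|z|\geq 2$ and $|z|\leq R$ for any fixed large $R$, and combined with the fact that $B_{a,\lambda}(z)\to\infty$ as $z\to\infty$ (since $z=\infty$ is a superattracting fixed point of local degree $3$), one gets that $\{|z|>2\}$ is forward invariant and every point in it converges to $\infty$. Hence $\{|z|>2\}\subset A^*(\infty)$ once $|\lambda|$ is small enough.

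For part \textit{b)}, the idea is that when $|z|$ is very small the term $\lambda/z^2$ dominates: writing $B_{a,\lambda}(z)=\frac{\lambda}{z^2}\bigl(1+\frac{z^5(z-a)}{\lambda(1-\overline a z)}\bigr)$, if $|z|<(|\lambda|/3)^{1/2}$ then $|z|^2<|\lambda|/3$, so $\left|\frac{\lambda}{z^2}\right|>3$, and moreover $\left|\frac{z^5(z-a)}{\lambda(1-\overline a z)}\right|\leq \frac{|z|^5(|z|+1)}{|\lambda|(1-|a||z|)}$, which tends to $0$ as $\lambda\to0$ (since $|z|^5/|\lambda| < (|\lambda|/3)^{5/2}/|\lambda| = |\lambda|^{3/2}/3^{5/2}\to0$ and the remaining factors are bounded). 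Therefore for $|\lambda|$ small enough, $|B_{a,\lambda}(z)|\geq 3\cdot\frac{1}{2}>2$, so $B_{a,\lambda}(z)\in\{|w|>2\}\subset A^*(\infty)$ by part \textit{a)}. One small subtlety: I should make sure the region $\{|z|<(|\lambda|/3)^{1/2}\}$ stays away from the pole $z_\infty=1/\overline a$ and the zero $z_0$ — this is automatic since that region shrinks to $\{0\}$ as $\lambda\to0$ while $z_\infty$ and $z_0$ stay bounded away from $0$ (recall $z_0(a,\lambda)\to z_0(a)\neq0$).

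The main obstacle, such as it is, is purely bookkeeping: choosing the threshold $\mathcal{C}(a)$ (equivalently, how small $|\lambda|$ must be) uniformly so that \emph{both} estimates hold simultaneously, and making the "forward invariance of $\{|z|>2\}$ implies contained in $A^*(\infty)$" step rigorous — the cleanest way is to observe that on $\{|z|\geq 2\}$ one has $|B_{a,\lambda}(z)|\geq c|z|$ for some $c>1$ (for $|z|$ in a compact annulus $2\leq|z|\leq R$) together with $|B_{a,\lambda}(z)|\geq|z|$ for $|z|\geq R$, so the iterates increase monotonically and escape, whence the whole set lies in a single Fatou component containing $\infty$, i.e.\ in $A^*(\infty)$. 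No genuinely hard analysis is needed; everything reduces to the elementary inequalities above plus continuity in $\lambda$.
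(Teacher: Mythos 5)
Your proposal is correct and follows essentially the same route as the paper: part \textit{a)} by perturbing the unperturbed picture $A^*(\infty)=\wcom\setminus\overline{\dis}$ (the paper simply invokes continuity in $\lambda$, where you make the escape estimate explicit), and part \textit{b)} by observing that the pole term $\lambda/z^2$ dominates on $\{|z|<(|\lambda|/3)^{1/2}\}$ and then applying part \textit{a)}; the paper gets the domination slightly more cleanly via the reverse triangle inequality, using that $|z^3(z-a)/(1-\overline{a}z)|<1$ on $\dis$, so that $|B_{a,\lambda}(z)|>3-1=2$ with no smallness condition on $\lambda$ beyond $|\lambda|/3<1$. Two cosmetic slips to fix: $3\cdot\tfrac{1}{2}=\tfrac{3}{2}$ is not $>2$, so you should bound your correction factor by something smaller than $\tfrac{1}{3}$ (which your estimate permits, since it tends to $0$); and your crude bound $|z|^3\tfrac{|z|-1}{|z|+1}$ does not exceed $|z|$ at $|z|=3/2$ --- but it does on $|z|\geq 2$, which is all the lemma requires (alternatively, note the Blaschke factor has modulus $>1$ outside $\overline{\dis}$, so $|B_a(z)|>|z|^3$ there).
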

\proof
The first statement follows by continuity of $B_{a,\lambda}(z)$ with respect to $\lambda$ and the fact that when $\lambda=0$ the immediate basin of attraction of infinity consists of the complement of the closed unit disk (see Section~\ref{sectionblas}).

Now assume that $|z|<\left(\frac{|\lambda|}{3}\right)^{1/2}<1$. Then, using that $|z^3(z-a)/(1-\overline{a}z)|<1$ and that $|\lambda|/|z|^2>3$ we have:

$$|B_{a,\lambda}(z)|=\left|z^3\frac{z-a}{1-\overline{a}z}+\frac{\lambda}{z^2}\right|\geq \left|\left|\frac{\lambda}{z^2}\right|-\left|z^3\frac{z-a}{1-\overline{a}z}\right|\right|> 3-1 = 2.$$

Hence, $|B_{a,\lambda}(z)|>2$ and $B_{a,\lambda}(z)\in A^*(\infty)$ by statement \textit{a)}.
\endproof

 We can now finish the proof of Theorem~\ref{thmcritzeros}.

\begin{proof}[Proof of statements b), c) and d) of Theorem~\ref{thmcritzeros}]

 By Proposition~\ref{annuluszeroscrit} we know that if $|\lambda|$ is small enough, then the round annulus $A$ of inner radius $\left(\frac{|\lambda|}{2|a|}\right)^{1/5}$ and outer radius $\left(2\frac{|\lambda|}{|a|}\right)^{1/5}$ contains the 5 critical points and the $5$ zeros which appear around $z=0$ after the perturbation. We first show that, if $|\lambda|$ is small enough, then  $A$ is mapped under {$B_{a,\lambda}$} into {the Fatou component $T_0$ which contains the pole $z=0$}. Every point $w\in A$ has the form $w=b\lambda^{1/5}$ with $b\in\com$ such that $(1/2|a|)^{1/5}<|b|<(2/|a|)^{1/5}$. Using the Taylor expansion of $(z-a)/(1-\overline{a}z)$ around $z=0$ we have

$$B_{a,\lambda}(z)=-a z^3+O(z^4)+ \frac{\lambda}{z^2}\Rightarrow |B_{a,\lambda}(w)|\leq C|\lambda|^{3/5}+O(|\lambda|^{4/5}),$$

\noindent where $C$ is a bounded constant which can be taken independently of $b$ and $\lambda$.  We know from Lemma~\ref{preiminfinit} that if $|\lambda|$ is small enough and $|z|<\left(\frac{|\lambda|}{3}\right)^{1/2}$, then $B_{a,\lambda}(z)\in A^*(\infty)$. Since $|\lambda|^{3/5}$ tends to zero faster than $|\lambda|^{1/2}$, we can conclude that if $|\lambda|$ is small enough then $B_{a,\lambda}(A)\subset T_0$, where $T_0$ denotes the preimage of $A^*(\infty)$ that contains $z=0$.

Up to this point we have studied the dynamics of all critical points and zeros of {$B_{a,\lambda}$} but the critical point $c_-=c_-(a,\lambda)$ and the zero $z_0=z_0(a,\lambda)$, which come from continuation of the critical point $c_-(a,0)$ and the zero $z_0(a,0)$ of $B_a(z)$ (Equation~(\ref{blasequation})). By continuity, if $|\lambda|$ is small enough we can take a curve $\gamma$ such that it contains $c_-$, separates $A$ and $z_0$, and such that $B_{a,\lambda}(\gamma)\subset \com\setminus \{T_0\cup A^*(\infty)\}$. This implies that $c_-$ and $z_0$ belong to different Fatou components than $A$.

 We can now finish the proof. Take $\mathcal{C}(a)$ to be a positive real constant depending on $a$ such that if $|\lambda|<\mathcal{C}(a)$ then all previous considerations hold. Since $z=0$ is a double preimage of $\infty$ and there are no other poles outside $A^*(\infty)$, we know that the preimage $T_0$ of $A^*(\infty)$ is mapped with degree 2 to $A^*(\infty)$ under {$B_{a,\lambda}$}. By application of the Riemann-Hurwitz formula (Theorem~\ref{riemannhurwitz}), we can conclude that $T_0$ is simply connected  since it contains no other critical point than $z=0$.  This  proves statement \textit{b)}.
 
 We denote by $A_0$ the Fatou component which contains the round annulus $A$. We know that it contains exactly 5 critical points and 5 zeros. Since it contains exactly $5$ zeros, then $A_0$ is mapped onto $T_0$ with degree $5$. Since $T_0$ is simply connected, by application of the Riemann-Hurwitz formula the set $A_0$ is doubly connected. This proves statement \textit{c)}. 
 
 Finally, the annular region in between $A_0$ and $A^*(\infty)$ contains $c_-$ and $z_0$ because of the choice of $\gamma$. Moreover, the connected component $D_{0}$ of $B_{a,\lambda}^{-1}(T_0)$ which contains $z_0$ is mapped with degree $1$ onto $T_0$ since it only contains a preimage of $z=0$ counting multiplicities. Consequently, it can not contain the critical point $c_-$ and is simply connected. This proves statement \textit{d)}.

\end{proof}

\section{Fatou set of the singularly perturbed Blaschke products: proof of Theorem~A}\label{sectionFatou}

The goal of this section is to study the Fatou components of the singular perturbation {$B_{a,\lambda}$} in the case that $c_-\in A(\infty)$ and to prove Theorem~A. The main result of the section is Theorem~\ref{thmtrichotomy}. Theorem~A is a direct corollary of Theorem~\ref{thmtrichotomy}. Indeed, Theorem~A is a simpler restatement of Theorem~\ref{thmtrichotomy} which omits some technicalities. 

The main ingredients for the proof of Theorem~\ref{thmtrichotomy} are Theorem~\ref{thmcritzeros}, the Riemann-Hurwitz formula (Theorem~\ref{riemannhurwitz}) and Proposition~\ref{grauanells}. This proposition describes the dynamics of {$B_{a,\lambda}$} in the case that $|\lambda|<\mathcal{C}(a)$, that the orbit of $c_-$, $\mathcal{O}(c_-)=\{c_-, B_{a,\lambda}(c_-), B_{a,\lambda}^2(c_-), \cdots\}$, intersects the annulus $A_0$, and that $c_-$ belongs to a Fatou component which surrounds $z=0$ (see Figure~\ref{esquemaanells}). 

\begin{propo}\label{grauanells}
Fix $a\in\dis^*$ and let $\lambda\in\com^*$. If  $|\lambda|<\mathcal{C}(a)$  and $\mathcal{O}(c_-)$ intersects the annulus $A_0$, then the critical point $c_-$ lies in a triply connected Fatou component, say $\mathcal{U}_c$, {which is eventually mapped onto $A_0$}. Moreover, if $\mathcal{U}_c$ surrounds $z=0$ then the following hold. 
\begin{itemize}

\item The triply connected Fatou component $\mathcal{U}_c$ bounds a closed disk $\mathcal{V}_1$ which is mapped with degree 1 onto the closed disk bounded by $B_{a,\lambda}(\mathcal{U}_c)$, which contains $T_0$.
\item The closed annular region $\mathcal{V}_2$ in between $T_0$ and $A_0$ is mapped with degree 2 onto the annular region in between $T_0$ and $ A^*(\infty)$. 
\item The closed annular region $\mathcal{V}_3$ in between $A_0$ and $\mathcal{U}_c$ is mapped with degree $3$ onto the annular region $\mathcal{W}_3$ in between $T_0$ and $B_{a,\lambda}(\mathcal{U}_c)$. 
\item The closed annular region $\mathcal{V}_4$ in between $\mathcal{U}_c$ and $A^*(\infty)$ is mapped with degree $4$ onto the annular region $\mathcal{W}_4$ in between  $B_{a,\lambda}(\mathcal{U}_c)$ and $A^*(\infty)$. 

\item The Fatou component $\mathcal{U}_c$ is contained in the region $\mathcal{W}_4$.

\item  The Fatou component $\mathcal{U}_c$ is mapped with degree 4 onto the annulus $B_{a,\lambda}(\mathcal{U}_c)$. 

\end{itemize}

\end{propo}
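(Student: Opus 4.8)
The proof rests entirely on the Riemann--Hurwitz formula (Theorem~\ref{riemannhurwitz}) together with careful bookkeeping of the $10$ critical points of $B_{a,\lambda}$ and of the six preimages each of $z=0$ and $z=\infty$, all organised around the configuration supplied by Theorem~\ref{thmcritzeros} (Figure~\ref{esquema1}).

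\emph{Triple connectivity and eventual landing on $A_0$.} Since $\mathcal{O}(c_-)$ meets $A_0$, let $n\geq 1$ be the least integer with $B_{a,\lambda}^{\,n}(c_-)\in A_0$; note $n\geq 1$ since, by Theorem~\ref{thmcritzeros}(d), $c_-$ lies in the annular region $R$ between $A_0$ and $A^*(\infty)$, hence $c_-\notin A_0$. Let $U_k$ be the Fatou component containing $B_{a,\lambda}^{\,k}(c_-)$, so $U_0=:\mathcal{U}_c$ and $U_n=A_0$. The key observation is that $U_1,\dots,U_{n-1}$ contain no critical point: every critical point of $B_{a,\lambda}$ lies in $A^*(\infty)$ ($z=\infty$ and $c_+$), in $T_0$ ($z=0$), in $A_0$ (the five critical points near the origin), or in $\mathcal{U}_c=U_0$ ($c_-$); and none of $A^*(\infty),\,T_0,\,A_0,\,U_0$ can equal $U_k$ for $1\leq k\leq n-1$, because $A^*(\infty)$ and $T_0$ are mapped into $A^*(\infty)$ forever and never reach $A_0$, the case $U_k=A_0$ violates minimality of $n$, and $U_k=U_0$ would make $\mathcal{U}_c$ periodic, forcing the non-periodic component $A_0=U_n$ into its cycle. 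Running Riemann--Hurwitz down the chain, $U_{n-1}\to A_0$ is unbranched so $m_{U_{n-1}}=2$, and inductively $m_{U_k}=2$ for $1\leq k\leq n-1$; finally $\mathcal{U}_c\to U_1$ carries only the simple critical point $c_-$ (the points $c_+(a,\lambda)$, $c_-(a,\lambda)$ are distinct for $a\in\dis^*$), so $m_{\mathcal{U}_c}-2=\deg(m_{U_1}-2)+1=1$, i.e.\ $\mathcal{U}_c$ is triply connected, and $B_{a,\lambda}^{\,n}(\mathcal{U}_c)=A_0$.

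\emph{The three complementary components.} Assume now $\mathcal{U}_c$ surrounds $z=0$. As $c_-,z_0,D_0$ all lie in $R$ and $\mathcal{U}_c$, being a Fatou component, cannot cross the Julia curves $\partial A_0,\partial A^*(\infty),\partial T_0,\partial D_0$, we get $\mathcal{U}_c\subset R\setminus\overline{D_0}$, and, surrounding $0$, it is essential in the annulus $R$. Hence $\mathcal{U}_c$ separates $\{0\}\cup\overline{T_0}\cup\overline{A_0}$ from $\{\infty\}\cup\overline{A^*(\infty)}$, and its three complementary components are $\Delta_1\supset\overline{T_0}\cup\overline{A_0}$ (containing $z=0$), $\Delta_2\supset\overline{A^*(\infty)}$ (containing $z=\infty$), and a third disk $\mathcal{V}_1$. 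The only component of $B_{a,\lambda}^{-1}(T_0)$ other than $A_0$ is $D_0$ (Theorem~\ref{thmcritzeros}(c),(d)), and $D_0$ is disjoint from $\mathcal{U}_c,\Delta_1,\Delta_2$, so $D_0\cup\{z_0\}\subset\mathcal{V}_1$; since $\mathcal{V}_1$ contains no critical point, $B_{a,\lambda}|_{\mathcal{V}_1}$ is an unbranched proper map of a disk, hence a homeomorphism onto a closed disk bounded by one boundary curve of $B_{a,\lambda}(\mathcal{U}_c)$, and this disk contains $B_{a,\lambda}(D_0)=T_0$. That is the first bullet.

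\emph{The shell degrees.} The fundamental degree data are: $\overline{T_0}\to\overline{A^*(\infty)}$ has degree $2$ (double pole at $0$); $\overline{A^*(\infty)}\to\overline{A^*(\infty)}$ has degree $4$ (Theorem~\ref{thmcritzeros}(a)); $\overline{D_0}\to\overline{T_0}$ has degree $1$; and $\overline{A_0}\to\overline{T_0}$ has degree $5$, splitting as degree $2$ along the inner boundary $\alpha^-$ of $A_0$ and degree $3$ along the outer boundary $\alpha^+$ — the split is obtained by applying Riemann--Hurwitz to $B_{a,\lambda}$ on the disk bounded by $\alpha^-$, which contains only the simple critical point $z=0$ and is therefore mapped $2$-to-$1$. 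Each shell $\mathcal{V}_2$ (between $T_0$ and $A_0$), $\mathcal{V}_3$ (between $A_0$ and $\mathcal{U}_c$), $\mathcal{V}_4$ (between $\mathcal{U}_c$ and $A^*(\infty)$) is doubly connected and contains no critical point, so $B_{a,\lambda}$ restricted to it is an unbranched proper map — a covering — between doubly connected regions; for such a covering the winding degree is the same along both boundary circles and the image is the annulus bounded by their images. Reading this off the boundary curve already understood gives winding $2$ for $\mathcal{V}_2$ (via $\partial T_0\to\partial A^*(\infty)$), winding $3$ for $\mathcal{V}_3$ (via $\alpha^+\to\partial T_0$), winding $4$ for $\mathcal{V}_4$ (via $\partial A^*(\infty)\to\partial A^*(\infty)$), and identifies the images $\mathcal{W}_3,\mathcal{W}_4$ as stated, with $B_{a,\lambda}(\mathcal{U}_c)$ the annulus enclosing both $\overline{T_0}$ and $z=\infty$ (a loop around $0$ in $\mathcal{U}_c$ maps to a loop around $B_{a,\lambda}(0)=\infty$ that also encloses $B_{a,\lambda}(\overline{A_0})=\overline{T_0}$). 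The boundary curve of $\mathcal{U}_c$ facing $A^*(\infty)$ is then forced to wind $4$ times to match the covering $\mathcal{V}_4\to\mathcal{W}_4$, so $\mathcal{U}_c$ maps onto $B_{a,\lambda}(\mathcal{U}_c)$ with degree $4$ — consistent with Riemann--Hurwitz (a triply connected domain with a single simple critical point onto an annulus) and with the global balance $\deg B_{a,\lambda}=6$ distributed over the complementary pieces (degree $2$ on $\overline{T_0}$ and degree $2$ more on the preimages of $B_{a,\lambda}(\mathcal{U}_c)$ lying in $\mathcal{V}_2$); the containment $\mathcal{U}_c\subset\mathcal{W}_4$ and the remaining degree statements follow by assembling the nested picture.

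The main obstacle is exactly this last step: Riemann--Hurwitz by itself is consistent with several values of $\deg\bigl(\mathcal{U}_c\to B_{a,\lambda}(\mathcal{U}_c)\bigr)$ and does not determine which boundary curve of $\mathcal{U}_c$ (or of $B_{a,\lambda}(\mathcal{U}_c)$) maps to which, so one must argue through winding numbers along the nested Julia curves, together with the global count $\deg B_{a,\lambda}=6$, to pin down the whole configuration of Figure~\ref{esquemaanells}; once the combinatorics is fixed, the rest is routine Riemann--Hurwitz on the structure already established in Theorem~\ref{thmcritzeros}.
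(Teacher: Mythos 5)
Your overall strategy---Riemann--Hurwitz plus bookkeeping of critical points, zeros and poles on the nested configuration of Theorem~\ref{thmcritzeros}, with degrees read off the boundary quasicircles---is the same as the paper's, and the first part (triple connectivity of $\mathcal{U}_c$ and the degrees of $\mathcal{V}_2$ and $\mathcal{V}_4$) is essentially correct. But there are two places where you assert precisely the facts that require an argument. The first is the claim that $D_0$ (equivalently $z_0$) is ``disjoint from $\Delta_1,\Delta_2$'', hence lies in the third complementary disk $\mathcal{V}_1$. You give no reason for this: $\Delta_1$ is everything enclosed by the inner exterior boundary of $\mathcal{U}_c$ and $\Delta_2$ is everything outside the outer one, and both contain portions of the annular region between $A_0$ and $A^*(\infty)$ in which $D_0$ is known to live, so a priori $D_0$ could sit in either of them, leaving $\mathcal{V}_1$ free of zeros and poles. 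That cannot happen---the image of $\partial\mathcal{V}_1$ is a boundary curve of $B_{a,\lambda}(\mathcal{U}_c)$ and hence surrounds the pole $z=0$, so by the argument principle $\mathcal{V}_1$ must contain a zero or a pole, and the only candidate not forcing $\mathcal{V}_1$ to contain $z=0$ is $z_0$---but this is exactly the combinatorial step the paper spends a paragraph on (classifying which of the three boundary components of $\mathcal{U}_c$ surround $z=0$, which surround $z_0$, and showing at most one can surround both). Without it, the first bullet (that $B_{a,\lambda}(\mathcal{V}_1)$ contains $T_0$) and your subsequent degree count for $\mathcal{V}_3$ are unsupported.

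The second and more serious gap is the containment $\mathcal{U}_c\subset\mathcal{W}_4$, which you yourself identify as ``the main obstacle'' and then propose to settle ``by winding numbers along the nested Julia curves, together with the global count $\deg B_{a,\lambda}=6$.'' That toolkit cannot decide the question: winding numbers and degree counts only tell you that $B_{a,\lambda}(\mathcal{U}_c)$ is an annulus surrounding $z=0$ somewhere in the region between $T_0$ and $A^*(\infty)$; they are equally consistent with $B_{a,\lambda}(\mathcal{U}_c)$ lying in $\mathcal{V}_4$, i.e.\ outside $\mathcal{U}_c$. Ruling that out needs a genuinely dynamical input: the paper observes that if $\mathcal{U}_c\not\subset\mathcal{W}_4$ then $\mathcal{W}_4\subset\mathcal{V}_4$, so $\mathcal{V}_4$ is forward invariant, and by Montel's theorem its interior would lie in a Fatou component outside $A(\infty)$---impossible since every critical point is in $A(\infty)$. (An alternative would be a modulus estimate: a degree-$4$ unbranched covering $\mathcal{V}_4\to\mathcal{W}_4$ with $\mathcal{W}_4$ an essential subannulus of $\mathcal{V}_4$ forces $\modul\mathcal{W}_4\leq\tfrac14\modul\mathcal{W}_4$.) Either way, some such argument must be supplied; as written, your proof stops exactly where the proposition becomes nontrivial.
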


\begin{figure}[hbt!]
\centering
\def\svgwidth{440pt}
\subimport{figures/}{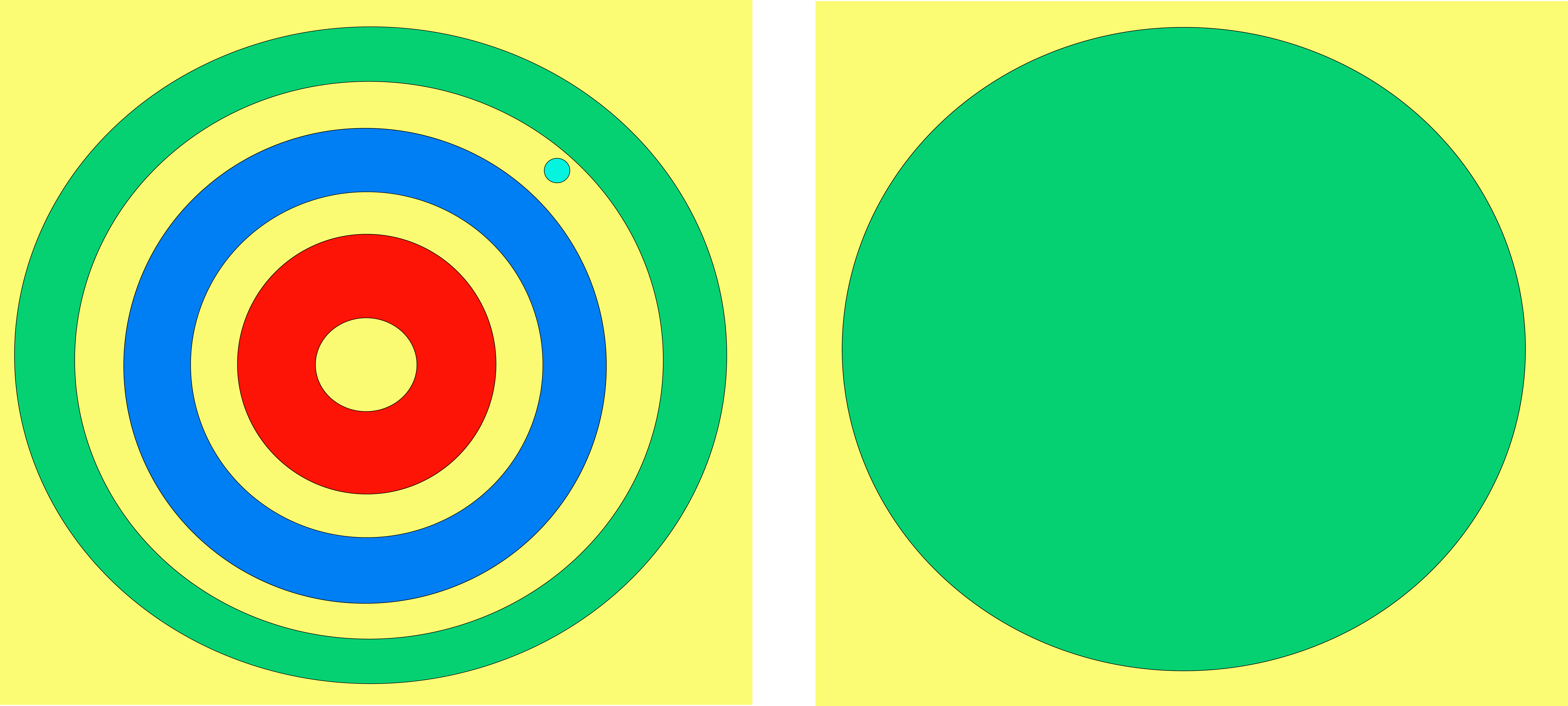_tex}
\caption{\small Summary of the dynamics of {$B_{a,\lambda}$} described in Proposition~\ref{grauanells}. The triply connected region $\mathcal{U}_c$ is mapped with degree 4 onto the annulus $B_{a,\lambda}(\mathcal{U}_c)$. The green annular region $\mathcal{V}_4$ is mapped with degree 4 to the green annular region $\mathcal{W}_4$.  The blue annular region $\mathcal{V}_3$ is mapped with degree 3 to the blue annular region $\mathcal{W}_3$. The pallid blue disk $\mathcal{V}_1$  is mapped with degree 1 to the region bounded by $B_{a,\lambda}(\mathcal{U}_c)$. The red region $\mathcal{V}_2$ is mapped with degree 2 to the full annular region bounded by $A^*(\infty)$ and $ T_0$.  {Since $\mathcal{U}_c\subset \mathcal{W}_4$, $\mathcal{V}_4\subset \mathcal{W}_4$ and either $B_{a,\lambda}(\mathcal{U}_c)=A_0$ or $B_{a,\lambda}(\mathcal{U}_c)\subset \mathcal{V}_3\cup \mathcal{V}_2$.  } }
\label{esquemaanells}
\end{figure}

\proof
It follows from the Riemann-Hurwitz formula (Theorem~\ref{riemannhurwitz}) that the preimage of an annulus $A$ which contains no critical value (i.e.\ the image of a critical point) under a holomorphic map is another annulus $A'$. Therefore the iterated preimages of the annulus $A_0$ are also annuli until one of them, which we denote by $\mathcal{U}_c$, contains the critical point $c_-$. Since $\mathcal{U}_c$ contains only one critical point and is mapped to a doubly connected domain with a certain degree $k$, it follows again from {the Riemann-Hurwitz formula that $\mathcal{U}_c$ has connectivity $k(2-2)+1+2=3$. Therefore, $\partial \mathcal{U}_c$ consists of 3 connected components. Moreover, since these boundary components are eventually mapped onto $\partial A^*(\infty)$, which is a quasicircle that intersects no critical orbit, they are quasicircles. 

Notice that the iterated preimages of $A_0$ either surround $z=0$ or some iterated preimage of $z=0$. Moreover, if a connected component $V$ of $B_{a,\lambda}^{-n}(A_0)$ does not surround $z=0$, then no component of $B_{a,\lambda}^{-1}(V)$ does. Now assume that $\mathcal{U}_c$ surrounds $z=0$. Since the boundary curves of $B_{a,\lambda}(\mathcal{U}_c)$ also surround $z=0$, we conclude that all boundary components of $\mathcal{U}_c$ surround $z=0$ or its preimage $z_0$ or both. Indeed, since $c_-$ lies in the annular region in between $A_0$ and $A^*(\infty)$, then so does $\mathcal{U}_c$ and the only zero that a connected component of $\partial \mathcal{U}_c$ can bound without surrounding $z=0$ is $z_0$. On the other hand, at most one component can bound both $z=0$ and $z_0$. Indeed, if two such boundary curves did, then the third one should lie in the annulus bounded by them but it could not surround neither $z_0$ nor $z=0$. Similar arguments yield that at most one such component surrounds $z=0$ and not $z_0$ and at most one of them surrounds $z_0$ and not $z=0$. Therefore, $\mathcal{U}_c$ has 2 exterior boundary components, which are quasicircles that surround $z=0$ (and consequently also the annulus $A_0$), and an interior boundary $\gamma_1$, which surrounds $z_0$ and not $z=0$ (see Figure~\ref{esquemaanells}). We denote by $\gamma_3$ and $\gamma_4$ the exterior boundary components, where $\gamma_3\subset\rm{Int}(\gamma_4)$.

Since the closed disk $\mathcal{V}_1$ bounded by $\gamma_1$ contains no pole and no critical point, it is mapped with degree 1 onto the closed disk bounded by $B_{a,\lambda}(\mathcal{U}_c)$ under {$B_{a,\lambda}$}. {Notice that $D_0\subset\mathcal{V}_1$, where $D_{0}$ is the Fatou component described in Theorem~\ref{thmcritzeros} which contains the zero $z_0$}.

Let $\mathcal{V}_4$ be the closed annular region in between $\mathcal{U}_c$ and $ A^*(\infty)$. Since $B_{a,\lambda}|_{\mathcal{V}_4}$ is proper, $\mathcal{V}_4$ is mapped onto the closed annular region $\mathcal{W}_4$ in between $B_{a,\lambda}(\mathcal{U}_c)$ and $ A^*(\infty)$ with a certain degree $d$ which is accomplished on the boundaries. Given that $\partial A^*(\infty)$ is mapped with degree 4 onto itself we can conclude that this degree is 4. 
Analogously, since $T_0$ is mapped with degree 2 onto $A^*(\infty)$, we can conclude that the annular region $\mathcal{V}_2$ in between $T_0$ and $A_0$ is mapped with degree 2 onto the annular region in between $T_0$ and $A^*(\infty)$. 

The domain $\mathcal{U}_c$ is contained inside the region $\mathcal{W}_4$ since otherwise the region $\mathcal{V}_4$ would be mapped into itself. Therefore, the points in $\mathcal{V}_4$ would never leave $\mathcal{V}_4$ under iteration. It would then follow from Montel's Theorem (see \cite{Mi1}) that the points in the interior of $\mathcal{V}_4$ would lie in a Fatou component which would not belong to $A(\infty)$, which is impossible since all critical points belong to $A(\infty)$.

The annular region $\mathcal{V}_3$ in between $A_0$ and $\mathcal{U}_c$ is mapped with degree 3 onto the region $\mathcal{W}_3$ in between  $T_0$ and $B_{a,\lambda}(\mathcal{U}_c)$. Indeed, $B_{a,\lambda}(\mathcal{V}_3)$ has 6 preimages since {$B_{a,\lambda}$} has degree 6 and it has 3 preimages missing since the sets $\mathcal{V}_1$ and $\mathcal{V}_2$ contain 1 and 2 preimages of it, respectively.

Finally, the domain $\mathcal{U}_c$ is mapped with degree 4 onto $B_{a,\lambda}(\mathcal{U}_c)$ since its boundaries are mapped with degree 4. Indeed, $\gamma_4$ is mapped with degree 4 onto one of the boundary components of $B_{a,\lambda}(\mathcal{U}_c)$ while $\gamma_1$ and $\gamma_3$ are mapped onto the other boundary component with degree 1 and 3, respectively.

\endproof

Using the information provided by Theorem~\ref{thmcritzeros} we can study the Fatou set of {$B_{a,\lambda}$} in the hyperbolic scenario for which the critical point $c_-$ belongs to $A(\infty)$.  Notice that $A_0$ and $D_{0}$ are the only preimages of $T_0$, which is the only preimage of $A^*(\infty)$. Therefore, a point in $A(\infty)\setminus\left(A^*(\infty) \cup T_{0}\right)$ is eventually mapped under iteration of {$B_{a,\lambda}$} into $A_0$ or into $D_{0}$. The Fatou domains that we may obtain depending on whether $c_-$ is eventually mapped into $A_0$ or $D_0$  can be very different and are studied in Theorem~\ref{thmtrichotomy}. In particular, Theorem~\ref{thmtrichotomy} tells us that if $c_-$ belongs to a preimage of $A_0$ which surrounds $z=0$ then all Fatou components have finite connectivity but there are components of arbitrarily large connectivity.

\begin{thm}\label{thmtrichotomy}
Fix $a\in\dis^*$ and let $\lambda\in\com^*$. Assume that $|\lambda|<\mathcal{C}(a)$ and that $c_-\in A(\infty)$. Then all Fatou components are bounded by quasicircles  and exactly one of the following is satisfied.

\begin{enumerate}[a)]
\item The critical point $c_-$ eventually falls under iteration into $D_{0}$ and  lies in a simply connected Fatou component. All Fatou components are either simply or doubly connected (see Figure~\ref{dynamfigureAB} (a) and (b)).

\item The critical point $c_-$ eventually falls under iteration into $A_0$ and lies in a triply connected Fatou component $\mathcal{U}_c$ which does not surround $z=0$.  All Fatou components have connecivity 1, 2 or 3 (see Figure~\ref{dynamfigureAB} (c) and (d)).
\item The critical point $c_-$ eventually falls under iteration into $A_0$ and lies in a triply connected Fatou component $\mathcal{U}_c$ which surrounds $z=0$. All Fatou components have finite connectivity but there are components of arbitrarily large connectivity (see Figure~\ref{dynamfigureC}).

\end{enumerate}

\end{thm}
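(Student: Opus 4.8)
The plan is to combine hyperbolicity with systematic use of the Riemann--Hurwitz formula (Theorem~\ref{riemannhurwitz}), together with the normal form of Theorem~\ref{thmcritzeros} and the annular picture of Proposition~\ref{grauanells}. First I would record two structural facts. Since $c_-\in A(\infty)$ and all remaining critical points of $B_{a,\lambda}$ also lie in $A(\infty)$ --- namely $z=\infty$ and $c_+$ belong to $A^*(\infty)$, the critical point at the pole $z=0$ maps to $\infty$, and the five critical points inside $A_0$ map into $T_0\subset A(\infty)$ --- the map $B_{a,\lambda}$ is hyperbolic; hence every Fatou component is eventually mapped onto $A^*(\infty)$, and, pulling back the quasicircle $\partial A^*(\infty)$ along the (critical-orbit-free) Julia set exactly as in the proof of Proposition~\ref{grauanells}, every Fatou component is bounded by quasicircles. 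Counting with multiplicity, the ten critical points are then distributed so that the only Fatou components containing critical points are $A^*(\infty)$ (three), $T_0$ (one), $A_0$ (five) and the component $\mathcal{U}_c$ of $c_-$ (one).

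Next I would pin down the trichotomy. Because $c_-$ lies in the annular region strictly between $A_0$ and $A^*(\infty)$ and $c_-\notin D_0$ (Theorem~\ref{thmcritzeros}\,(d)), following the orbit of $c_-$ towards $z=\infty$ forces it to reach $A^*(\infty)$ through $T_0$, and to reach $T_0$ through $A_0$ or $D_0$; let $p\ge 1$ be the first time at which $B_{a,\lambda}^{p}(c_-)\in A_0\cup D_0$. If this first visit is to $D_0$, the orbit never meets $A_0$ (since $D_0\to T_0\to A^*(\infty)$), and symmetrically; this is the dichotomy separating case~\textit{a)} from cases~\textit{b)}--\textit{c)}, and Proposition~\ref{grauanells} then splits the second alternative according to whether $\mathcal{U}_c$ surrounds $z=0$, giving the three mutually exclusive, exhaustive possibilities. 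In case~\textit{a)} the chain $\mathcal{U}_c\to B_{a,\lambda}(\mathcal{U}_c)\to\cdots\to D_0$ consists of simply connected sets carrying no critical point except $c_-\in\mathcal{U}_c$, so since $c_-$ is a simple critical point the Riemann--Hurwitz formula forces $B_{a,\lambda}|_{\mathcal{U}_c}$ to have degree $2$ and $\mathcal{U}_c$ to be simply connected. For the global bound in case~\textit{a)} one inducts on the number of iterates needed to reach $A^*(\infty)$: as $\mathcal{O}(c_-)\cap A_0=\emptyset$, no critical point enters $A_0$ at a positive time, so every iterated preimage of $A_0$ is free of critical points and hence an annulus, and the only critical component which is itself an annulus is $A_0$ (mapped with degree $5$, and with $5$ critical points, onto the disk $T_0$); Riemann--Hurwitz then gives connectivity $\le 2$ for every Fatou component.

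In case~\textit{c)} the growth mechanism is transparent from Proposition~\ref{grauanells}: the annulus $\mathcal{V}_4$ between $\mathcal{U}_c$ and $A^*(\infty)$ carries no critical point and is mapped as an (unramified) degree-$4$ covering onto the annulus $\mathcal{W}_4\supset\mathcal{V}_4$, which contains $\mathcal{U}_c$. Since $\mathcal{U}_c$ surrounds $z=0$, its core curve is homotopically essential in $\mathcal{W}_4$, so pulling $\mathcal{U}_c$ back inside $\mathcal{V}_4$ produces a single component $\mathcal{U}_c^{(1)}\subset\mathcal{V}_4$, still surrounding $z=0$, mapped with degree $4$ and no critical point onto $\mathcal{U}_c$; by Riemann--Hurwitz $m_{\mathcal{U}_c^{(1)}}=4(m_{\mathcal{U}_c}-2)+2=6$. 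Iterating inside $\mathcal{V}_4\subset\mathcal{W}_4$ yields components $\mathcal{U}_c^{(n)}$ with $m_{\mathcal{U}_c^{(n)}}-2=4^{\,n}\to\infty$. That every individual Fatou component nonetheless has finite connectivity is automatic: each is a preimage of $A^*(\infty)$ after finitely many steps, and each such step multiplies the connectivity by at most $\deg B_{a,\lambda}$ and adds at most the total number of critical points.

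The delicate point --- and the step I expect to be the main obstacle --- is the upper bound in case~\textit{b)}. Here the triply connected Fatou components are precisely $\mathcal{U}_c$ and its iterated preimages (inductively, no triply connected component maps onto a disk, and among the critical components only $\mathcal{U}_c$ maps onto an annulus, with exactly one critical point), and since $\mathcal{U}_c$ is strictly preperiodic these preimages contain no critical point and no critical value; hence each maps onto $\mathcal{U}_c$ (or onto a conformal copy of it) by an \emph{unramified} covering, and one must show this covering is \emph{trivial}, i.e.\ of degree $1$ --- otherwise Riemann--Hurwitz produces a Fatou component of connectivity $\ge 4$. This is exactly where the hypothesis that $\mathcal{U}_c$ does \emph{not} surround $z=0$ is used: building on the observation from the proof of Proposition~\ref{grauanells} that not surrounding $z=0$ is inherited by all preimages, I would show that no iterated preimage of $\mathcal{U}_c$ encircles the pole $z=0$ and that the two holes of $\mathcal{U}_c$ (which can enclose only simple preimages of $z=0$, such as $z_0$) produce no monodromy, so that the order-two pole at the origin --- precisely the source of the connectivity jump in case~\textit{c)} --- plays no role. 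Once this degree-$1$ statement is established, the same Riemann--Hurwitz induction as in case~\textit{a)}, now permitting connectivity $3$, completes the proof.
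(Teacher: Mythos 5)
Your overall architecture matches the paper's: the quasicircle statement via hyperbolicity and pull-backs of $\partial A^*(\infty)$, the trichotomy via the fact that $A_0$ and $D_0$ are the only preimages of $T_0$, and the case analysis driven by Theorem~\ref{thmcritzeros}, Proposition~\ref{grauanells} and Riemann--Hurwitz. Case \textit{a)} is essentially the paper's argument. Case \textit{c)} is a correct minor variant: the paper produces preimages of $\mathcal{U}_c$ surrounding $z=0$ in $\mathcal{V}_2$ and in $\mathcal{V}_3\cup\mathcal{V}_4$ and tracks the recursion $m\mapsto j(m-2)+2$, whereas you iterate only the degree-$4$ unramified covering $\mathcal{V}_4\to\mathcal{W}_4$ to get the explicit sequence $m_n-2=4^n$; both are fine, and your explicit remark on why every component still has \emph{finite} connectivity is a point the paper leaves implicit.

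The genuine gap is exactly where you predicted it: in case \textit{b)} you reduce everything to the claim that each iterated preimage of $\mathcal{U}_c$ is an \emph{unramified covering of degree one} of a triply connected domain, and then you only announce that you ``would show'' the two holes of $\mathcal{U}_c$ produce no monodromy. As written this is a plan, not an argument, and it is the one step of the theorem that requires a new idea rather than bookkeeping. The paper closes it with a filled-disk trick: let $\mathcal{U}_c'$ be the union of $\mathcal{U}_c$ with the bounded components of its complement. Because $\mathcal{U}_c$ does not surround $z=0$, the set $\mathcal{U}_c'$ is a topological disk, and since all ten critical points lie in $U=A^*(\infty)\cup T_0\cup A_0\cup\mathcal{U}_c$ while $B_{a,\lambda}^{-n}(\mathcal{U}_c')\cap U=\emptyset$ for all $n\geq 1$ (the forward orbit of $U$ never enters $\mathcal{U}_c'$), no critical value of any iterate lies in $\mathcal{U}_c'$. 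Hence every branch of $B_{a,\lambda}^{-n}$ is univalent on the simply connected set $\mathcal{U}_c'$, so each iterated preimage of $\mathcal{U}_c$ is a conformal copy of it, i.e.\ triply connected; the rest of case \textit{b)} is then the same Riemann--Hurwitz induction as in case \textit{a)}. Note that this simultaneously disposes of your auxiliary claim that no iterated preimage of $\mathcal{U}_c$ encircles $z=0$, which by itself would not suffice: absence of the pole from the preimage region does not by itself force the covering degree to be $1$; you need the inverse branches to be single-valued on a simply connected set containing $\mathcal{U}_c$, which is what the hypothesis ``$\mathcal{U}_c$ does not surround $z=0$'' actually buys.
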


\begin{proof}
Notice that all critical points belong to $A(\infty)$. Therefore, all Fatou components other than $A^*(\infty)$ are  preimages of it. Since $\partial A^*(\infty)$ is a quasicircle and no critical orbit intersects it we can conclude that the boundaries of all Fatou component are also quasicircles.

Since $c_-\in A(\infty)\setminus\left(A^*(\infty)\cup T_0\right)$, the only preimage of $A^*(\infty)$ is $T_0$ and the only preimages of $T_0$ are $A_0$ and $D_{0}$, we can conclude that $c_-$ is eventually mapped under iteration of {$B_{a,\lambda}$} into $D_{0}$ or into $A_0$.

First we assume that $\mathcal{O}(c_-)$ intersects $D_{0}$. Since all Fatou components other than $A^*(\infty)$, $T_0$, $D_{0}$ or $A_0$ are eventually mapped under iteration onto $A_0$ or $D_{0}$ we have that all of them are either disks or annulus. Indeed, since $\mathcal{O}(c_-)$ does not intersect $A_0$ we obtain from the Riemann-Hurwitz formula (Theorem~\ref{riemannhurwitz}) that all iterated preimages of $A_0$ are annuli. On the other hand, it also follows from the Riemann-Hurwitz formula that at least two different critical points are required to map a multiply connected domain onto a disk. Hence, all iterated preimages of $D_{0}$ are  disks.

Second we assume that $\mathcal{O}(c_-)$ intersects $A_{0}$. It follows from Proposition~\ref{grauanells} that there is a triply connected domain $\mathcal{U}_c$ which contains the critical point $c_-$. As in the previous case, every Fatou component which is not eventually mapped under iteration into $\mathcal{U}_c$ is either a disk or an annulus. If we also assume that $\mathcal{U}_c$ does not surround $z=0$ then all its iterated preimages are triply connected domains. To show that we can consider the filled disk $\mathcal{U}_c'$ which consists of the union of the points in $\mathcal{U}_c$ and the points contained in the bounded components of $\com\setminus\mathcal{U}_c$. Notice that all critical points are in $U=A^*(\infty)\cup T_0\cup A_0\cup \mathcal{U}_c$. Since $B_{a,\lambda}^{-n}(\mathcal{U}_c')\cap U=\emptyset$ for all $n\geq 1$, all branches of the inverse {$B_{a,\lambda}^{-n}$} restricted to $\mathcal{U}_c'$ are conformal for all $n\in\nat$. Consequently, all iterated preimages of $\mathcal{U}_c$ are also triply connected domains.

Finally we assume that  $\mathcal{O}(c_-)$ intersects $A_{0}$ and that $\mathcal{U}_c$ surrounds $z=0$. In that case, the domain $\mathcal{U}_c$ has a unique preimage in $\mathcal{V}_2$ and another one in $\mathcal{V}_3$ or $\mathcal{V}_4$, which also surround $z=0$. This follows from the fact that every Jordan curve surrounding $z=0$ and contained in the annular regions $\mathcal{W}_3$ or $\mathcal{W}_4$ has a unique preimage in the regions $\mathcal{V}_3$ or $\mathcal{V}_4$, respectively, and a unique  preimage in $\mathcal{V}_2$. Let $V$ be any of the two Fatou components which are preimages of $\mathcal{U}_c$ and surround $z=0$. Since {$B_{a,\lambda}$} restricted to  $\mathcal{V}_i$  has degree $i$, it follows from the Riemann Hurwitz-formula that $V$ has connectivity $i(3-2)+2>3$, where $V\subset \mathcal{V}_i$. Repeating this process we obtain a sequence of iterated preimages of $\mathcal{U}_c$ which surround $z=0$ and have connectivity $j(m-2)+2>m$, where $m$ is the connectivity of its image. In this way we obtain Fatou components of arbitrarily large connectivity (see Figure~\ref{dynamfigureC}).

\end{proof}

\section{Julia set of the singularly perturbed Blaschke products: proof of Theorem~B}\label{sectionJulia}

The goal of this section is to describe the Julia sets of the maps {$B_{a,\lambda}$} in the case that statement \textit{c)} of Theorem~\ref{thmtrichotomy} holds {and to prove} Theorem~B. To do so, we first show the existence of a Cantor set of quasicircles as described by McMullen in \cite{McM1}. Afterwards we show how to obtain countably many copies of the original Cantor set of quasicircles using the dynamics of the map. Finally we use symbolic dynamics to show the existence of an uncountable number of point components in the Julia set.

\begin{proof}[Proof of Theorem B]

We first show  the existence of a Cantor set of quasicircles. Let $A_0$ be the annulus containing the 5 critical points and 5 zeros which appear around $z=0$. It follows from the structure of the dynamical plane shown in Proposition~\ref{grauanells} that it has exactly two preimages which are Fatou components that surround $z=0$. One of them lies in $\mathcal{V}_2$ and the other one lies in $\mathcal{V}_3\cup \mathcal{V}_4\cup \mathcal{U}_c$. We can iterate this process, obtaining at each step exactly two new Fatou components which surround $z=0$ from every old component. These preimages are annuli, unless one of them is the triply connected Fatou component $\mathcal{U}_c$. In that case we repeat the process considering  the annulus $A_c'=\mathcal{U}_c\cup \mathcal{V}_1$ instead. In this fashion we obtain a sequence of open annuli whose boundaries are quasicircles which surround $z=0$ and belong to the Julia set. These boundaries together with its accumulation sets form a Cantor set of quasicircles in the same way as described by McMullen in \cite{McM1}.

We now see where the countable number of Cantor sets of quasicircles comes from. We know from Proposition~\ref{grauanells} that the closed disk $\mathcal{V}_1$ is mapped with degree one onto the closed disk bounded by $B_{a,\lambda}(\mathcal{U}_c)$. This last disk contains the part of the previously described Cantor set of quasicircles that lies in $\mathcal{W}_3$, which is also a Cantor set of quasicircles. Since the map {$B_{a,\lambda}$} restricted to $\mathcal{V}_1$ is a homeomorphism (it is conformal in a neighbourhood of $\mathcal{V}_1$), we conclude that $\mathcal{V}_1$ contains a Cantor set of quasicircles. To finish the argument it is enough to notice that the disk $\mathcal{V}_1$ has exactly 6 preimages  and so does every set which is eventually mapped onto $\mathcal{V}_1$. Therefore, $\mathcal{V}_1$ has countably many iterated preimages which are also homeomorphic to $\mathcal{V}_1$ since they cannot bound any critical point and, therefore, {$B_{a,\lambda}$} is conformal in a neighbourhood of them. Each of these disks contains a copy of the Cantor set in $\mathcal{V}_1$, which leads to the existence of a countable union of Cantor sets of quasicircles in the Julia set. Notice that these iterated preimages are not disjointed. Indeed, as we shall see in the remaining of the proof, many of these disks are contained in other ones.

The description that we have given of the Julia set up to this point is not complete. Indeed, it is known that periodic points are dense in the Julia set. However, the Cantor sets of quasicircles contained in the iterated preimages of $\mathcal{V}_1$ are eventually mapped into the original Cantor set {of quasicircles} and, therefore, contain no periodic point. The missing points of the Julia set are those whose orbit never {leaves} the union $\mathcal{D}$ of $\mathcal{V}_1$ and the disks which are eventually mapped onto it, $\mathcal{D}=\bigcup_{n=0}^{\infty}B_{a,\lambda}^{-n}(\mathcal{V}_1)$.
To understand the dynamics of those points we will use symbolic dynamics. We begin by labelling the filled annulus  $A_c'=\mathcal{U}_c\cup \mathcal{V}_1$ by $0$, $A_0':=A_c'$. Afterwards we label the iterated preimages of $A_c'$ which surround $z=0$ with the natural numbers, i.e.\ we denote every connected component of $\bigcup_{n=1}^{\infty}B_{a,\lambda}^{-n}(A_c')$ in $\mathcal{V}_2\cup \mathcal{V}_3\cup \mathcal{V}_4$ which surrounds $z=0$  by $A_i'$ with $i\in\nat$. With these labels we can associate to every point $z$ whose orbit never {leaves} $\mathcal{D}$ an itinerary $S(z)=(s_0,s_1,s_2,...)$, where $s_j=k$ if $B_{a,\lambda}^j(z)$ belongs  to $A_k'$. Notice that the itinerary depends on the chosen labelling, but this is not relevant.

It is now important to understand how the realizable itineraries, i.e.\ the itineraries which are realized  by some point $z\in\mathcal{D}$, look like. The fact that if $A_i'\neq A_0'$ then $B_{a,\lambda}(A_i')$ is another annulus, say $A_{s(i)}'$, implies that the index $i$ must always be followed by the index $s(i)$. By Proposition~\ref{grauanells} the disk $\mathcal{V}_1$, which is contained in $A_c'=A_0'$,  is mapped onto $\mathcal{W}_3\cup T_0$. Therefore, we conclude that the index $0$ must be followed by an index which labels an annulus contained in $\mathcal{W}_3$. Again by Proposition~\ref{grauanells}, since $\mathcal{U}_c\subset \mathcal{W}_4$ we conclude that we cannot have two zeros in a row.  Moreover, given any $i$, there is an $n=n(i)$ such that $s^n(i)=s\circ \overset{n)}{\cdots}\circ s(i)=0$.  Therefore, the index 0 appears infinitely many times in any realizable sequence $S(z)$.

We now discuss the simpler case where the sequence $S(z)$ is periodic. Without loss of generality we may assume that it begins with the index 0. Then we have $S(z)=(\overline{0,s_1,s_2,...,s_n})=(0,s_1,s_2,...,s_n,0,s_1,s_2,...,s_n,...)$. The first step is to understand the set of points which follow the sequence $(0,s_1,s_2,...,s_n,0)$. Since $\mathcal{V}_1$ does not intersect any critical orbit, every branch of the inverse $B_{a,\lambda}^{-n}$ is conformal in a neighbourhood of $\mathcal{V}_1$ for all $n\in\nat$. Given that {$B_{a,\lambda}^n$} maps $A_{s_1}'$ onto $A_0'$ with a certain degree $k\geq2$, there are $k$ disjoint preimages of $\mathcal{V}_1$ in $A_{s_1}'$.  Since {$B_{a,\lambda}$} maps $\mathcal{V}_1$ onto $T_0\cup \mathcal{W}_3\supset A_{s_1}'$ with degree 1, the set of points with itinerary $(0,s_1,s_2,...,s_n,0)$ corresponds to the disjoint union of $k$ sets which are compactly contained in $\mathcal{V}_1$. Notice that we can take small annuli in the Fatou set which surround these $k$ disks. Analogously, the set of points with itinerary $(0,s_1,s_2,...,s_n,0,s_1,s_2,...,s_n,0)$ correspond to $k^2$ disks compactly contained in the $k$ previous ones. Indeed, every original disk contains $k$ sub-disks, each of which is mapped univalently onto one of the original ones by {$B_{a,\lambda}^{n+1}$}.  As before, these new disks are surrounded by small annuli in the Fatou set. Repeating this process, at the step $p$ we obtain $k^p$ closed disks which are compactly contained in the $k^{p-1}$ disks of the previous step and are surrounded by small annuli in the Fatou set. Standard arguments of complex dynamics yield that these disks shrink to a Cantor set. We conclude that the set of points with itinerary  $S(z)=(\overline{0,s_1,s_2,...,s_n})$ corresponds to Cantor set of points. Moreover, since each of these points is surrounded by arbitrarily small annuli in the Fatou set we conclude that they are point components of the Julia set.

The general case derives analogously. The only difference is that every time that we come back to the index 0, the number of disks compactly contained in the previous ones varies depending on the subsequence. For each realizable sequence we obtain a nested sequence of disks which will also converge to a Cantor set of points which are point components of the Julia set (c.f.\ \cite{BDGR}).

\end{proof}

\bibliography{bibliografia}
\bibliographystyle{AIMS}

\end{document}